\newcommand{\BOX}{\ensuremath\Box}
\newtheorem{theorem}{Theorem}
\newtheorem*{theorem*}{Theorem}
\newtheorem{pro}{Proposition}
\newtheorem{lemma}{Lemma}
\theoremstyle{remark}
\newtheorem{remark}{Remark}
\theoremstyle{definition}
\newcommand{\ben}{\begin{eqnarray}}
\newcommand{\een}{\end{eqnarray}}
\newcommand{\beno}{\begin{eqnarray*}}
\newcommand{\eeno}{\end{eqnarray*}}
\DeclareMathOperator*{\esssup}{ess\,sup}
\def\XXint#1#2#3{{\setbox0=\hbox{$#1{#2#3}{\int}$ }
\vcenter{\hbox{$#2#3$ }}\kern-.6\wd0}}
\definecolor{darkgreen}{rgb}{0,0.5,0}
\definecolor{darkblue}{rgb}{0,0,0.7}
\definecolor{darkred}{rgb}{0.9,0.1,0.1}
\definecolor{lightblue}{rgb}{0,0.51,1}
\begin{document}

\title{{Estimates of the singular set for the Navier-Stokes equations with supercritical assumptions on the pressure}}

\author[T. Barker]{Tobias Barker}
\address[T. Barker]{Department of Mathematical Sciences, University of Bath, Bath BA2 7AY. UK}
\email[T. Barker]{tobiasbarker5@gmail.com}
\author[W. Wang]{Wendong Wang}
\address[W. Wang]{School of Mathematical Sciences, Dalian University of Technology, Dalian, 116024, P. R. China.}
\email[W. Wang]{wendong@dlut.edu.cn}

\keywords{}
\subjclass[2010]{}
\date{\today}

\maketitle

\noindent {\bf Abstract}
In this paper, we investigate systematically the supercritical conditions on the pressure $\pi$ associated to a Navier-Stokes solution $v$ (in three-dimensions), which ensure a reduction in the Hausdorff dimension of the singular set at a first potential blow-up time.
As a consequence, we show that if the pressure $\pi$ satisfies the endpoint scale invariant conditions
$$\pi\in L^{r,\infty}_{t}L^{s,\infty}_{x}\quad\textrm{with}\,\,\tfrac{2}{r}+\tfrac{3}{s}=2\,\,\textrm{and}\,\,r\in (1,\infty),$$
then the Hausdorff dimension of the singular set at a first potential blow-up time is arbitrarily small. This hinges on two ingredients: (i) the proof of a higher integrability result for the Navier-Stokes equations with certain supercritical assumptions on $\pi$ and (ii) the establishment of a convenient $\varepsilon$- regularity criterion involving space-time integrals of
$$|\nabla v|^2|v|^{q-2}\,\,\,\textrm{with}\,\,q\in (2,3). $$
The second ingredient requires a modification of ideas in Ladyzhenskaya and Seregin's paper \cite{ladyzhenskayaseregin}, which build upon ideas in Lin \cite{lin}, as well as Caffarelli, Kohn and Nirenberg \cite{CKN}.

\section{Introduction}
In this paper, we consider the three-dimensional incompressible Navier-Stokes equations
\begin{equation}\label{NSintro}
\partial_{t}v-\Delta v+v\cdot\nabla v+\nabla \pi=0,\,\,\,\nabla\cdot v=0,\,\,\,v(\cdot,0)=v_{0}(x)\,\,\,\textrm{in}\,\,\,\mathbb{R}^3\times (0,T).
\end{equation}
Here, $T\in (0,\infty]$.

The rigorous existence theory for the Navier-Stokes equations was pioneered by Leray in \cite{Le}. In particular, Leray showed that for any square-integrable solenodial initial data $v_{0}(x)$ there exists at least one associated global-in-time \textit{weak Leray-Hopf solution} $v$. The question of global-in-time regularity of weak Leray-Hopf solutions remains open in three dimensions.

For weak Leray-Hopf solutions \textit{conditional} regularity results (in terms of the velocity field) were also given in Leray's paper \cite{Le}. In particular, \cite{Le} implies that for any $q\in (3,\infty]$ there exists a $C_{q}>0$ such that if $v$ is a weak Leray-Hopf solution (assumed to be smooth on $\mathbb{R}^3\times (0,T)$) then
\begin{equation}\label{Lerayssmoothness}
\|v(\cdot,t)\|_{L^{q}(\mathbb{R}^3)}\leq \frac{C_{q}}{(T-t)^{\frac{1}{2}(1-\frac{3}{q})}}\,\,\,\textrm{for}\,\,\,\textrm{some}\,\,\,\,t\in (0,T)\Rightarrow\quad v\,\,\textrm{can}\,\,\textrm{be}\,\,\textrm{smoothly}\,\,\textrm{extended}\,\,\textrm{past}\,\,T.
\end{equation}
Almost 70 years later, Escauriaza, Seregin and \v{S}ver\'{a}k's celebrated paper \cite{ESS} treated the case $q=3$. In particular, it was shown in \cite{ESS} that
\begin{equation}\label{ESSsmoothness}
\|v\|_{L^{\infty}_{t}(0,T; L^{3}(\mathbb{R}^3))}<\infty\Rightarrow\quad v\,\,\textrm{can}\,\,\textrm{be}\,\,\textrm{smoothly}\,\,\textrm{extended}\,\,\textrm{past}\,\,T.
\end{equation}
There have been many extensions of \eqref{ESSsmoothness}. See, for example, \cite{sereginL3limit}, \cite{albritton} and
\cite{albrittonbarker}.

Notice that the assumptions \eqref{Lerayssmoothness} and \eqref{ESSsmoothness} are invariant with respect to the scaling symmetry of the Navier-Stokes equations
\begin{equation}\label{eqrescaling}
(v_{\lambda}(x,t), \pi_{\lambda}(x,t), v_{0\lambda}(x))=(\lambda v(\lambda x,\lambda^2 t),\lambda^2 \pi(\lambda x,\lambda^2 t), \lambda v_{0}(\lambda x) )\,\,\textrm{with}\,\,\,\lambda\in (0,\infty).
\end{equation}
This paper is concerned with conditional results for the three-dimensional Navier-Stokes equations \textit{in terms of the pressure}, which are less well understood compared to regularity results formulated in terms of the velocity.

To the best of our knowledge, regularity criteria involving the pressure were first considered in Kaniel's paper \cite{kaniel}. Kaniel considered a weak solution $v:\Omega\times (0,\infty)\rightarrow \mathbb{R}^3$ ($\Omega$ is a smooth bounded domain and $v$ satisfies the Dirichlet boundary condition on $\partial\Omega$) with smooth initial conditions. Furthermore, Kaniel showed that if the associated pressure $\pi$ satisfies
\begin{equation}\label{kanielcondition}
\|\pi\|_{L^{\infty}_{t}(0,T; L^{q}(\Omega))}<\infty\quad\textrm{with}\,\,q>\frac{12}{5}
\end{equation}
then $v$ can be smoothly extended past $T$. Note that the assumed bound on the pressure in \eqref{kanielcondition} is not invariant with respect to the rescaling \eqref{eqrescaling} and is \textit{subcritical}\footnote{We say a quantity $F(v,\pi)\in [0,\infty)$ is \textit{subcritical} if, for the rescaling \eqref{eqrescaling}, there exists $\alpha>0$ such that $F(v_{\lambda},\pi_{\lambda})=\lambda^{\alpha} F(v,\pi)$ for all $\lambda>0$.}. Kaniel's result paved the way for other regularity criteria for the Navier-Stokes equations in terms of subcritical norms of the pressure. See, for example, \cite{berselli}, \cite{bdv} and \cite{chaelee}.

Conditional regularity results in terms of \textit{scale-invariant}\footnote{We say a quantity $F(v,\pi)\in [0,\infty)$ is \textit{scale-invariant}, with respect to the rescaling \eqref{eqrescaling}, if $F(v_\lambda, \pi_{\lambda})=F(v,\pi)$ for all $\lambda>0$.} norms of the pressure were pioneered by Berselli and Galdi's seminal paper \cite{berselligaldi}. In \cite{berselligaldi}, Berselli and Galdi considered a weak Leray-Hopf solution $v:\mathbb{R}^n\times (0,\infty)\rightarrow\mathbb{R}^n$, with sufficiently smooth initial data. They showed that if the associated pressure $\pi$ satisfies the scale-invariant bounds
\begin{align}\label{pressurescaleinvariant}
\begin{split}
&\|\pi\|_{L^r_{t}(0,T; L^{s}(\mathbb{R}^n))}<\infty\,\,\,\textrm{with}\,\,\,\frac{2}{r}+\frac{n}{s}=2\,\,\,\textrm{and}\,\,\,s>\frac{n}{2},\\
\textrm{or}\,\,\textrm{if}\,\,&\|\pi\|_{L^{\infty}_{t}(0,T; L^{\frac{n}{2}}(\mathbb{R}^n))}\,\,\textrm{is}\,\,\textrm{small}\,\,\textrm{enough},
\end{split}
\end{align}
then $v\in C^{\infty}((0,T]\times \mathbb{R}^n).$ Galdi and Berselli's influential work paved the way for many regularity criteria in terms of scale-invariant norms involving the pressure. Whilst it is not possible to list such works exhaustively, we refer the reader to \cite{zhou06proc}, \cite{zhou06math}, \cite{kanglee}, \cite{struwe}, \cite{caifangzhai}, \cite{suzucki1}, \cite{suzucki2} and \cite{ji}, for example. Let us mention that there are many interesting regularity criteria involving the pressure that are of a different nature to the aforementioned literature. We point out \cite{neustupanecas}, \cite{sereginsverakpressure}, \cite{caotiti}, \cite{constantin} and \cite{tranyu}.

Despite a substantial number of results concerning regularity criterion in terms of the pressure, there remains a ``scaling gap'' between knowledge of the pressure for any weak Leray-Hopf solution and conditions on the pressure sufficient to ensure regularity. In particular, for a weak Leray-Hopf solution $v:\mathbb{R}^3\times (0,\infty)\rightarrow \mathbb{R}^3$, it is known that the associated pressure $\pi$ belongs to $$L^{a}_{t}L^{b}_{x}(\mathbb{R}^3\times (0,\infty))\quad\textrm{with}\quad\frac{2}{a}+\frac{3}{b}=3\quad\textrm{and}\quad1\leq a<\infty.$$
Even more surprising is that it remains unclear if regularity of the Navier-Stokes equations holds when assuming the pressure analogue of Escauriaza, Seregin and \v{S}ver\'{a}k's condition \eqref{ESSsmoothness} or other scale-invariant endpoint cases. In particular, the following is open.
\begin{itemize}
\item[]\textbf{Open Problem.} Suppose $v:\mathbb{R}^3\times (0,\infty)\rightarrow\mathbb{R}^3$ is a weak Leray-Hopf solution, with sufficiently smooth initial data. Fix $s\in [\frac{3}{2},\infty)$ and $r\in (1,\infty]$ such that $\frac{2}{r}+\tfrac{3}{s}=2$.
$$\textrm{Does}\,\,\,\|\pi\|_{L^{r,\infty}(0,T; L^{s}(\mathbb{R}^3))}<\infty \Rightarrow v\in C^{\infty}((0,T]\times \mathbb{R}^3)? $$
\end{itemize}
Here, $L^{r,\infty}(\mathbb{R})$ is the Lorentz space, which is slightly larger\footnote{For example, $|t|^{-\frac{1}{r}}\in L^{r,\infty}(\mathbb{R})\setminus L^{r}(\mathbb{R})$. Throughout this paper, we use the convention that $L^{\infty}=L^{\infty,\infty}$. } than $L^{r}(\mathbb{R})$ and is defined in `2.Preliminaries'.

This paper is motivated by these open problems. Specifically, we are motivated by the following question.
\begin{itemize}
\item[]\textbf{(Q)} \textit{Consider a weak Leray-Hopf solution of the three-dimensional Navier-Stokes equations. What assumptions on the pressure ensure a reduction in the Hausdorff dimension of the singular set at a first potential blow-up time?}
\end{itemize}
\subsection{Main Results}
Note that if $v:\mathbb{R}^3\times (0,\infty)\rightarrow\mathbb{R}^3$ is a weak Leray-Hopf solution, which first loses smoothness at $T^*>0$, then Caffarelli, Kohn and Nirenberg's seminal paper \cite{CKN} implies that
$$\mathcal{H}^{1}(\sigma)=0. $$
Without extra assumptions, it is unknown whether the Hausdorff dimension can be reduced below 1 beyond logarithmic factors (see \cite{choelewis}). Our main results show that the additional  supercritical\footnote{We say a quantity $F(v,\pi)\in [0,\infty)$ is \textit{supercritical} if, for the rescaling \eqref{eqrescaling}, there exists $\alpha>0$ such that $F(v_{\lambda},\pi_{\lambda})=\lambda^{-\alpha} F(v,\pi)$ for all $\lambda>0$.} pressure assumption \eqref{pressuresupercritmaintheo} or the endpoint scale-invariant pressure assumption \eqref{typeIpresgenmaintheo2} give an improvement in the known (unconditional) dimension of the singular set.
Let us now state our first main result.
\begin{theorem}\label{hausdorffdimreduce}
Let $\delta\in (0,\frac12)$, $\gamma\in (0,\frac12)$, $p\in (\frac{3}{2+\gamma},\frac{3}{\gamma})$ and $r\in (1,\infty)$. Suppose further that $(p,r,\delta,\gamma)$ satisfy the following relations.
\begin{itemize}
\item[1.]$\frac{2}{r}+\tfrac{3}{p}=2+\gamma$.

\item[2.] For $r\geq 2$, we take $\frac{3\gamma}{2+2\gamma}<\delta<\frac{1}{2}$ and $g(\delta,\gamma)<p,$
where \beno
g(\delta,\gamma):= \frac{3\delta}{2\delta-(1-\delta)\gamma}.
\eeno
\item[3.] For $1<r<2$, we  take $\frac{3\gamma}{2+2\gamma}<\delta<\frac{1}{2}$ and
$\frac{3}{1+\gamma}< p\leq\frac{2\delta}{\gamma}$.
\end{itemize}

Let $v$ be a weak Leray-Hopf solution to the Navier-Stokes equations on $\mathbb{R}^3\times (0,\infty)$. Assume that $v$ first blows-up at $T^*>0$, namely
$$v\in L^{\infty}_{loc}([0,T^*); L^{\infty}(\mathbb{R}^3))\,\,\,\textrm{and}\,\,\,\lim_{t\uparrow T^*}\|v(\cdot,t)\|_{L^{\infty}(\mathbb{R}^3)}=\infty. $$
Assume that the pressure $\pi$ associated to $v$ satisfies
\begin{equation}\label{pressuresupercritmaintheo}
\pi\in L^{r}(0,T^*; L^{p}(\mathbb{R}^3)),
\end{equation}
where $(p,r,\delta,\gamma)$ are as specified in the first paragraph.
Let\footnote{Note that $(x,T^*)$ is a singular point of $v$ if $v\notin L^{\infty}(B(x,R)\times (T^*-R^2,T^*))$ for all sufficiently small $R<T^*$.}
\begin{equation}\label{sigmadefhausdorff}
\sigma:=\{x: (x,T^*)\,\,\,\textrm{is}\,\,\,\textrm{a}\,\,\,\textrm{singular}\,\,\,\textrm{point}\,\,\,\textrm{of}\,\,\,v\}.
\end{equation}
Then the above assumptions imply that
$$\mathcal{H}^{2\delta}(\sigma)=0.$$
Here, $\mathcal{H}^{2\delta}$ denotes the Hausdorff measure of dimension $2\delta.$
\end{theorem}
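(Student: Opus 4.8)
The plan is to combine the two main ingredients advertised in the abstract: a higher-integrability result for Navier--Stokes solutions under the supercritical pressure assumption \eqref{pressuresupercritmaintheo}, and an $\varepsilon$-regularity criterion expressed in terms of space-time integrals of $|\nabla v|^2|v|^{q-2}$. First I would record what the pressure assumption buys us at the level of the energy/local-energy machinery: since $\pi\in L^r_tL^p_x$ with $\tfrac{2}{r}+\tfrac{3}{p}=2+\gamma$, the scaling deficit $\gamma>0$ upgrades the usual local energy inequality for a suitable weak solution. Testing the equation against $|v|^{q-2}v$ (the natural multiplier behind the quantity $|\nabla v|^2|v|^{q-2}$, which is why $q\in(2,3)$ appears), and absorbing the pressure term using H\"older with the exponents dictated by $(p,r)$, I expect to obtain an a priori bound showing that on $(0,T^*)$ one has local control of $\int |\nabla v|^2|v|^{q-2}$ together with $\sup_t \int |v|^q$. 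The role of conditions 2 and 3 on $(p,r,\delta,\gamma)$ is precisely to make the relevant H\"older exponents admissible and to pin down the value of $q$ (equivalently $\delta$) for which the absorption closes; I would carry out the bookkeeping that translates $g(\delta,\gamma)<p$ (for $r\ge 2$) or $p\le 2\delta/\gamma$ (for $1<r<2$) into ``the higher-integrability estimate holds with exponent $q=q(\delta)$.''

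Next I would invoke the $\varepsilon$-regularity criterion (ingredient (ii), the modification of Ladyzhenskaya--Seregin building on Lin and on \cite{CKN}): there exists $\varepsilon_0>0$ such that if, for a parabolic cylinder $Q_\rho(x_0,T^*)=B(x_0,\rho)\times(T^*-\rho^2,T^*)$,
\[
\rho^{-(\text{appropriate power})}\int_{Q_\rho(x_0,T^*)}|\nabla v|^2|v|^{q-2}\,dx\,dt<\varepsilon_0,
\]
then $(x_0,T^*)$ is a regular point. The scaling exponent here is chosen so that the integral quantity is scale-invariant in the sense of \eqref{eqrescaling}; because $q\in(2,3)$, this scaling places the criterion at the $2\delta$-dimensional level rather than the $1$-dimensional level of the classical CKN quantity $\rho^{-1}\int|\nabla v|^2$. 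The point is that a scale-invariant quantity that fails to vanish at scale $\rho\to 0$ on a set $\sigma$ forces a lower bound $\int_{Q_\rho(x,T^*)}|\nabla v|^2|v|^{q-2}\ge c\varepsilon_0\rho^{2\delta}$ for every $x\in\sigma$; a Vitali covering argument then bounds $\mathcal{H}^{2\delta}$ of the singular set by a fixed multiple of the total mass $\int_{\mathbb{R}^3\times(T^*-\tau,T^*)}|\nabla v|^2|v|^{q-2}$, which the higher-integrability step of the first paragraph has shown to be finite and, moreover, to tend to $0$ as $\tau\downarrow 0$ by absolute continuity of the integral. Hence $\mathcal{H}^{2\delta}(\sigma)=0$.

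So the structure is: (a) establish the higher-integrability bound $|\nabla v|^2|v|^{q-2}\in L^1_{loc}$ up to $T^*$ from \eqref{pressuresupercritmaintheo}, with $q$ linked to $\delta$ via the stated arithmetic on $(p,r,\delta,\gamma)$; (b) quote the $\varepsilon$-regularity criterion in its scale-invariant form; (c) run the standard covering argument of \cite{CKN}-type to conclude $\mathcal{H}^{2\delta}(\sigma)=0$, using absolute continuity of $\int|\nabla v|^2|v|^{q-2}$ to make the covering mass small. Steps (b) and (c) are essentially adaptations of known arguments. I expect the main obstacle to be step (a): controlling the pressure contribution when testing against $|v|^{q-2}v$. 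The pressure term $\int \nabla\pi\cdot |v|^{q-2}v$ does not integrate by parts as cleanly as in the $L^2$ energy estimate because $\Div(|v|^{q-2}v)\ne 0$, so one is left with $\int \pi\, |v|^{q-2}(\nabla v : (\text{something}))$-type terms that must be dominated by the good terms $\int|\nabla v|^2|v|^{q-2}$ plus lower-order quantities controllable by the Leray--Hopf energy bound; making this absorption work for the full supercritical range is exactly where the constraints $g(\delta,\gamma)<p$ and $p\le 2\delta/\gamma$ enter, and handling the two cases $r\ge 2$ and $1<r<2$ separately (the latter needing a different splitting in time) is the technically delicate part. A secondary subtlety is justifying that $v$ is a \emph{suitable} weak solution on which these local estimates and $\varepsilon$-regularity are valid; this follows from the assumption that $v$ is smooth on $[0,T^*)$, so the estimates are first derived for the smooth solution with constants independent of the approach to $T^*$.
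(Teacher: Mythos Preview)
Your proposal is correct and follows essentially the same approach as the paper: apply the higher-integrability result (Theorem~\ref{thm:higherinteggen}, proved by testing against $v|v|^{1-2\delta}$ and absorbing the pressure term via H\"older with the constraints 2--3 dictating when this closes) to get $\int_{t_1}^{T^*}\!\int_{\mathbb{R}^3}|\nabla v|^2|v|^{q-2}\,dx\,ds<\infty$ with $q=3-2\delta$, then use the $\varepsilon$-regularity criterion (Proposition~\ref{Bqsmallreg}, with scaling power $3-q=2\delta$) together with a Vitali covering and absolute continuity of the integral to conclude $\mathcal{H}^{2\delta}(\sigma)=0$. Your identification of step~(a) as the technical core, of the role of the parameter constraints, and of the covering endgame all match the paper exactly.
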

$$\pi\in L^{r}(0,T^*; L^{p}(\mathbb{R}^3))\quad\textrm{or}\quad v\in L^{2r}(0,T^*; L^{2p}(\mathbb{R}^3)). $$
\begin{remark}
The indices of the above theorem are represented in Figure 1. In particular, when the indices are in the shaded region, the Hausdorff dimension of $\sigma$ is strictly less than 1 due to Theorem \ref{hausdorffdimreduce}. The line $L_2$ has the formula $\frac1r+\frac3p=2$ and the line $L_1$ has the formula $\frac1r+\frac1p=1$ . Note that in case 2. of Theorem \ref{hausdorffdimreduce} we have $\frac3p=2+\gamma-\frac2r<2-(\frac{1}{\delta}-1)\gamma$, which implies $\gamma<\frac1r$. Hence in case 2., $\frac2r+\frac3p=2+\gamma<2+\frac1r$, which is $L_2$. Similarly, for case 3. of Theorem \ref{hausdorffdimreduce}, we have
$\frac3p=2+\gamma-\frac2r>\frac{3\gamma}{2\delta}>3\gamma$, which implies $\gamma<1-\frac1r$. Hence, in case 3., $\frac2r+\frac3p=2+\gamma<3-\frac1r,$ which is $L_1$.
\end{remark}
\begin{figure}
		\begin{minipage}[t]{0.3\textwidth}
\centering
\begin{tikzpicture}[domain=0:8]
\draw[->](0,0)--(8.2,0)node[below]{$\frac{1}{p}$};
\draw[->](0,0)--(0,8.2)node[above]{$\frac{1}{r}$};
\draw  [black](4,0)--(0,6)   (0,6)node[left]{$1$}-- node [color=black!70,pos=0.25,below,sloped]{$\frac2r+\frac3p=2$} (4,0)node[below]{$\frac23$};
\draw  [black](6,0)--(2,6)   (2,6)node[left]{$(\frac13,1)$}-- node [color=black!70,pos=0.25,above,sloped]{$\frac2r+\frac3p=3$} (6,0)node[below]{$1$};
\draw  [blue,dashed](6,0)--(0,6) -- node[color=black!70,pos=0.25,above,sloped]{$L_1$}  (3,3)node[right]{$(\frac12, \frac12)$};
\draw  [blue,dashed](3,3)--(4,0) (4,1/2)node[color=black!70,right]{$L_2$};
\draw  [blue,dashed](3,3)--(2,6);
\draw  [blue,dashed](0,3)--(3,3)   (0,3)node[left]{$\frac12$};
\draw  [blue,dashed](2,0)--(2,3)   (2,0)node[below]{$\frac13$};
\filldraw [gray] (4,0)--(0,6)--(3,3);
\draw[->](2.5,3)--(6,5)node[right]{$\mathcal{H}^{2\delta}(\sigma)=0,\delta\in (0,\frac12)$};
\end{tikzpicture}
\caption{}
\end{minipage}
\end{figure}

Let us state our second main result, which treats endpoint cases for scale-invariant norms of the pressure.
\begin{theorem}\label{hausdorffdimreducegen2}
Fix $s\in (\frac{3}{2},\infty)$ and $r\in (1,\infty)$ such that $\frac{2}{r}+\tfrac{3}{s}=2$. Let $v$ be a weak Leray-Hopf solution to the Navier-Stokes equations on $\mathbb{R}^3\times (0,\infty)$. Assume that $v$ first blows-up at $T^*>0$, namely
$$v\in L^{\infty}_{loc}([0,T^*); L^{\infty}(\mathbb{R}^3))\,\,\,\textrm{and}\,\,\,\lim_{t\uparrow T^*}\|v(\cdot,t)\|_{L^{\infty}(\mathbb{R}^3)}=\infty. $$
Assume that the pressure $\pi$ associated to $v$ satisfies the scale-invariant bound
\begin{equation}\label{typeIpresgenmaintheo2}
\pi\in {L^{r,\infty}(0,T^*; L^{s,\infty}(\mathbb{R}^3))}.
\end{equation}
Let
\begin{equation}\label{sigmadefhausdorffgen2}
\sigma:=\{x: (x,T^*)\,\,\,\textrm{is}\,\,\,\textrm{a}\,\,\,\textrm{singular}\,\,\,\textrm{point}\,\,\,\textrm{of}\,\,\,v\}.
\end{equation}
Then the above assumptions imply that
$$\mathcal{H}^{2\delta}(\sigma)=0,$$
for any $\delta\in (0,\frac{1}{2})$.
Here, $\mathcal{H}^{2\delta}$ denotes the Hausdorff measure of dimension $2\delta.$
\end{theorem}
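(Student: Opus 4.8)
\textbf{Proof strategy for Theorem \ref{hausdorffdimreducegen2}.}
The plan is to deduce Theorem \ref{hausdorffdimreducegen2} from Theorem \ref{hausdorffdimreduce} by a simple interpolation/embedding argument that converts the scale-invariant Lorentz bound \eqref{typeIpresgenmaintheo2} into a subcritical bound of the form \eqref{pressuresupercritmaintheo}, with parameters $(p,r',\delta,\gamma)$ that can be arranged to fall into either case 2 or case 3 of Theorem \ref{hausdorffdimreduce}. Concretely, since $T^*<\infty$ and $\pi\in L^{r,\infty}(0,T^*;L^{s,\infty}(\mathbb{R}^3))$ with $\tfrac{2}{r}+\tfrac{3}{s}=2$, I would first lose an $\varepsilon$ of integrability in time: for any $r'<r$ one has $L^{r,\infty}(0,T^*)\hookrightarrow L^{r'}(0,T^*)$ on the bounded interval, so $\pi\in L^{r'}(0,T^*;L^{s,\infty}(\mathbb{R}^3))$. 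Then I would trade the weak-$L^s$ norm in space for a genuine $L^p$ norm with $p<s$; this is \emph{not} a free inclusion on all of $\mathbb{R}^3$, so the point is to exploit that $\pi$ is the pressure associated with a weak Leray-Hopf solution, hence already lies in $L^a_tL^b_x$ with $\tfrac{2}{a}+\tfrac{3}{b}=3$ (the a priori regularity recalled in the introduction), and interpolate the weak-$L^{s,\infty}_x$ bound against this background $L^b_x$ bound to obtain $\pi\in L^{r''}_tL^p_x$ for suitable $p\in(\tfrac{3}{2},s)$ and $r''$ slightly below $r'$; equivalently, one interpolates in the full space-time scale between the scale-invariant line $\tfrac{2}{r}+\tfrac{3}{s}=2$ and the a priori line $\tfrac{2}{a}+\tfrac{3}{b}=3$, which produces exactly the supercritical regime $2<\tfrac{2}{r''}+\tfrac{3}{p}<3$ used in Theorem \ref{hausdorffdimreduce}.

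Having produced such a $(p,r'')$ with $\tfrac{2}{r''}+\tfrac{3}{p}=2+\gamma$ for some small $\gamma>0$, the next step is the bookkeeping: given the target $\delta\in(0,\tfrac12)$, choose $\gamma$ small enough (by taking $r''$ close to $r$ and $p$ close to $s$) that the constraint $\tfrac{3\gamma}{2+2\gamma}<\delta$ holds, and then check the remaining inequality in case 2 (namely $g(\delta,\gamma)<p$ where $g(\delta,\gamma)=\tfrac{3\delta}{2\delta-(1-\delta)\gamma}$) or in case 3 (namely $\tfrac{3}{1+\gamma}<p\le\tfrac{2\delta}{\gamma}$), according to whether $r''\ge 2$ or $r''<2$. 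Since as $\gamma\downarrow 0$ one has $g(\delta,\gamma)\to \tfrac{3}{2}<p$ and $\tfrac{2\delta}{\gamma}\to\infty$ while $\tfrac{3}{1+\gamma}\to 3$, all these conditions are satisfied once $\gamma$ is taken sufficiently small; the membership $p\in(\tfrac{3}{2+\gamma},\tfrac{3}{\gamma})$ and $r''\in(1,\infty)$ are likewise automatic in this regime. One should treat the cases $r\ge 2$ and $r<2$ separately from the outset, since the interpolation has to be steered so that the resulting $r''$ stays on the correct side of $2$ (when $r=2$ one can perturb to either side). Then Theorem \ref{hausdorffdimreduce} applies verbatim and yields $\mathcal{H}^{2\delta}(\sigma)=0$, which is the claim.

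The main obstacle I anticipate is precisely the spatial step $L^{s,\infty}_x\to L^p_x$: weak-$L^s$ is strictly larger than $L^s$ and does not embed into any $L^p$ on the unbounded domain $\mathbb{R}^3$, so the argument genuinely needs the pairing with the universal $L^a_tL^b_x$ pressure bound, and one has to verify that the interpolated exponents can be pushed to land in the \emph{open} region of admissible $(p,r,\delta,\gamma)$ rather than merely on its boundary $L_1$ or $L_2$. A secondary technical point is checking that on the finite interval $(0,T^*)$ the Lorentz-in-time loss $L^{r,\infty}_t\hookrightarrow L^{r'}_t$ is quantitative with a constant depending only on $T^*$, $r$, $r'$, which is standard. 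Once these two embeddings are in hand, the proof is just the elementary verification that the perturbed exponents satisfy the hypotheses of Theorem \ref{hausdorffdimreduce}, so no new analytic input beyond Theorem \ref{hausdorffdimreduce} and the a priori regularity of the pressure is required.
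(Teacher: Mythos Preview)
Your strategy is correct and is essentially the paper's own proof: the authors also interpolate the scale-invariant Lorentz bound on $\pi$ against a fixed a priori Lebesgue bound on the pressure (choosing $L^{1}_{t}L^{3}_{x}$, $L^{2}_{t}L^{3/2}_{x}$, or $L^{4/3}_{t}L^{2}_{x}$ according to whether $r>2$, $r<2$, or $r=2$) to land on the supercritical line $\tfrac{2}{r_1}+\tfrac{3}{s_1}=2+\gamma$ with $\gamma>0$ as small as needed for the given $\delta$, then verify the index constraints of Theorem~\ref{hausdorffdimreduce} (equivalently Theorem~\ref{thm:higherinteggen}) exactly via the limits $g(\delta,\gamma)\to\tfrac32$ and $\tfrac{2\delta}{\gamma}\to\infty$ that you describe. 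The only cosmetic difference is that the paper interpolates directly in space and time via Proposition~\ref{interpolativepropertyLorentz} rather than first passing from $L^{r,\infty}_t$ to $L^{r'}_t$ on the finite interval, but this yields the same exponents and the same conclusion.
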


In order to prove Theorems \ref{hausdorffdimreduce}-\ref{hausdorffdimreducegen2}, we need to prove a generalization of the higher integrability result in \cite{barkerhigherinteg} by means of the following theorem.
\begin{theorem}\label{thm:higherinteggen}
Let $\delta\in (0,\frac12)$, $\gamma\in (0,\frac12)$, $p\in (\frac{3}{2+\gamma},\frac{3}{\gamma})$ and $r\in (1,\infty)$. Suppose further that $(p,r,\delta,\gamma)$ satisfy the following relations.
\begin{itemize}
\item[1.]$\frac{2}{r}+\tfrac{3}{p}=2+\gamma$.

\item[2.] For $r\geq 2$, we take $\frac{3\gamma}{2+2\gamma}<\delta<\frac{1}{2}$ and $g(\delta,\gamma)<p,$
where \beno
g(\delta,\gamma):=\frac{3\delta}{2\delta-(1-\delta)\gamma}.
\eeno
\item[3.] For $1<r<2$, we  take $\frac{3\gamma}{2+2\gamma}<\delta<\frac{1}{2}$ and
$\frac{3}{1+\gamma}< p\leq\frac{2\delta}{\gamma}$.
\end{itemize}

Let $v$ be a weak Leray-Hopf solution to the Navier-Stokes equations on $\mathbb{R}^3\times (0,\infty)$. Assume that $v$ first blows-up at $T^*>0$, namely
\begin{equation}\label{vbounded2}
v\in L^{\infty}_{loc}([0,T^*); L^{\infty}(\mathbb{R}^3))\,\,\,\textrm{and}\,\,\,\lim_{t\uparrow T^*}\|v(\cdot,t)\|_{L^{\infty}(\mathbb{R}^3)}=\infty.
\end{equation}
Assume that the pressure $\pi$ associated to $v$ satisfies
\begin{equation}\label{pressuresupercrit}
\pi\in L^{r}(0,T^*; L^{p}(\mathbb{R}^3)),
\end{equation}
where $(p,r,\delta,\gamma)$ are as specified in the first paragraph.

Then the above assumptions imply that we have higher integrability \textbf{up to $T^*$}. Namely, for all $t_{1}\in (0,T^*)$ we have
\begin{equation}\label{higherintegrable2}
\||v|^{\frac{q}{2}}\|_{L^{\infty}((t_1,T^*); L^{2}(\mathbb{R}^3))}^2+\int\limits_{t_1}^{T^*}\int\limits_{\mathbb{R}^3} |\nabla v|^2|v|^{q-2} dxds+\int\limits_{t_1}^{T^*}\int\limits_{\mathbb{R}^3} |\nabla (|v|^{\frac{q}{2}})|^2 dxds<\infty\,\,\,\textrm{with}\,\,\,q:=3-2\delta.
\end{equation}

\end{theorem}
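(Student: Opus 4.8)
\textbf{Proof proposal for Theorem \ref{thm:higherinteggen}.}

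The plan is to run a standard energy-type estimate for the quantity $|v|^{q/2}$ with $q = 3 - 2\delta \in (2,3)$, using the equation multiplied by $v|v|^{q-2}$, and to absorb the nonlinear and pressure terms using the supercritical hypothesis \eqref{pressuresupercrit} together with a bootstrap on a short time interval near $T^*$. First I would fix $t_1 \in (0,T^*)$; on $(t_1, T^*)$ the solution is smooth (blow-up is first at $T^*$), so all manipulations are justified classically and the only issue is obtaining estimates \emph{uniform up to $T^*$}. Testing \eqref{NSintro} with $v|v|^{q-2}$ and integrating by parts gives the identity
\begin{equation*}
\frac{1}{q}\frac{d}{dt}\int |v|^q\,dx + \int |\nabla v|^2 |v|^{q-2}\,dx + c_q \int |\nabla(|v|^{q/2})|^2\,dx = \int \pi\, v\cdot\nabla(|v|^{q-2})\,dx + (q-2)\int \pi\, |v|^{q-2}\,(\text{div-type term}),
\end{equation*}
where the convection term $\int (v\cdot\nabla v)\cdot v|v|^{q-2} = 0$ by incompressibility, and the pressure term is integrated by parts so that only $\pi$ times (at most) one derivative of $|v|^{q/2}$ appears. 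Here one uses the elementary fact that $|\nabla(|v|^{q/2})|^2 \simeq |v|^{q-2}|\nabla v|^2$ pointwise (up to $q$-dependent constants), so the two dissipation integrals in \eqref{higherintegrable2} are comparable and it suffices to control one of them.

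The core estimate is to bound the pressure term $\big|\int \pi\, |v|^{q-2}\nabla v\big|$. Writing $|v|^{q-2}\nabla v \simeq |v|^{q/2-1}\nabla(|v|^{q/2})$, Cauchy--Schwarz gives a bound by $\tfrac{1}{2}\int |\nabla(|v|^{q/2})|^2 + C\int |\pi|^2 |v|^{q-2}$; the first term is absorbed into the left side. It then remains to control $\int_{t_1}^{T^*}\int |\pi|^2 |v|^{q-2}\,dx\,ds$ by interpolation, using $\pi \in L^r_tL^p_x$ and the (a priori known, from the energy inequality) bound $v \in L^\infty_t L^2_x \cap L^2_t \dot H^1_x$, supplemented by the quantity $\sup_t \||v|^{q/2}\|_{L^2}^2 = \sup_t \|v\|_{L^q}^q$ and $\int |\nabla(|v|^{q/2})|^2$ that we are trying to estimate — i.e.\ this is a Grönwall/absorption argument. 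Applying Hölder in space to $\int |\pi|^2 |v|^{q-2}$ with exponents matched to $p$ and then Gagliardo--Nirenberg to interpolate $\|v\|_{L^{\ast}}$ between $\|v\|_{L^2}$ and $\|\nabla(|v|^{q/2})\|_{L^2}$ (and likewise Hölder in time against $L^r_t$), one arrives at a differential inequality of the form
\begin{equation*}
\frac{d}{dt}y(t) + \tfrac{1}{2}\|\nabla(|v|^{q/2})\|_{L^2}^2 \le C\,\|\pi(\cdot,t)\|_{L^p}^{\theta}\, y(t) + (\text{lower order}), \qquad y(t):=\||v|^{q/2}(\cdot,t)\|_{L^2}^2 = \|v(\cdot,t)\|_{L^q}^q,
\end{equation*}
where the precise exponents and the requirement $\|\pi(\cdot,t)\|_{L^p}^\theta \in L^1_t(t_1,T^*)$ (equivalently $\theta r' \le r$, i.e.\ a scaling/Hölder balance) is exactly what forces the constraints linking $(p,r,\delta,\gamma)$ in hypotheses 1.--3.; the split into the cases $r\ge 2$ and $1<r<2$ reflects which of the two interpolation endpoints is binding, and the functions $g(\delta,\gamma)$ and the bound $p \le 2\delta/\gamma$ should drop out of demanding that the Gagliardo--Nirenberg exponent be admissible while keeping the pressure power time-integrable. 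Grönwall then yields $\sup_{(t_1,T^*)} y(t) < \infty$ and, reinserting, $\int_{t_1}^{T^*}\|\nabla(|v|^{q/2})\|_{L^2}^2 < \infty$, which is \eqref{higherintegrable2}.

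The main obstacle I expect is verifying that the exponent bookkeeping in the interpolation of $\int |\pi|^2|v|^{q-2}$ closes \emph{precisely} under hypotheses 1.--3.: one must check that the chosen Hölder pair for $\pi$ versus $|v|^{q-2}$, the Gagliardo--Nirenberg exponent for $v$, and the Hölder exponent in time against $L^r_t$ are simultaneously admissible (all exponents $\ge 1$, GN weights summing correctly to match $q = 3 - 2\delta$, and the resulting power of $\|\pi\|_{L^p}$ lying in $L^1_t$), and that this is possible exactly when $(p,r,\delta,\gamma)$ lie in the stated region — in particular that the boundary conditions $\delta > \tfrac{3\gamma}{2+2\gamma}$, $g(\delta,\gamma) < p$ (case $r\ge 2$), and $\tfrac{3}{1+\gamma} < p \le \tfrac{2\delta}{\gamma}$ (case $1 < r < 2$) are the sharp thresholds. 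A secondary technical point is making the argument genuinely uniform up to $T^*$ rather than on each $(t_1, T^*-\eta)$: since the constants in the differential inequality depend only on the fixed norms $\|\pi\|_{L^r_tL^p_x(t_1,T^*)}$ and the energy of $v$, not on $\eta$, the bound passes to the limit $\eta \downarrow 0$ by monotone convergence, but one should state this carefully. I would also need the elementary lemma that $\nabla(|v|^{q/2})$ is well-defined and satisfies the stated pointwise comparison for $q \in (2,3)$ on the region of smoothness, which is routine.
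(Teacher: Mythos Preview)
Your outline has the right skeleton (test with $v|v|^{q-2}$, isolate the pressure term, close by H\"older/interpolation), but the step where you apply Cauchy--Schwarz to produce $\int |\pi|^2|v|^{q-2}$ and then ``H\"older in space with exponents matched to $p$'' is too rigid and does not close over the full parameter range. Concretely, putting both factors of $\pi$ into $L^p_x$ requires $p\ge 2$; but in case~2 of the theorem ($r\ge 2$) one has $p\in (g(\delta,\gamma),\tfrac{3}{1+\gamma}]$ and $g(\delta,\gamma)\to\tfrac32$ as $\gamma\to 0$, so $p$ can be well below $2$ (e.g.\ $\gamma=0.01$, $\delta=0.4$ gives admissible $p\approx 1.6$, $r\approx 15$). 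Conversely, in case~3 ($1<r<2$) one does have $p>2$, but then $\|\pi(\cdot,t)\|_{L^p}^2\notin L^1_t$, so the Gr\"onwall step fails on the time side. There is also a disconnect in your write-up: you start from $|\pi|^2$ but your final differential inequality has $\|\pi\|_{L^p}^\theta$ with an unspecified $\theta$ --- that flexibility is exactly what your Cauchy--Schwarz move has given away.

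The device the paper uses, and which your proposal is missing, is to keep the pressure term in the form $\int |\pi|\,|\nabla(|v|^{\frac{3}{2}-\delta})|\,|v|^{\frac12-\delta}$ (no Cauchy--Schwarz), split $|\pi|=|\pi|^\theta|\pi|^{1-\theta}$ with a \emph{free} parameter $\theta\in[\tfrac12,1]$, H\"older so that $|\pi|^\theta$ lands in $L^p_x$ and the remaining factor $|\pi|^{1-\theta}|v|^{\frac12-\delta}$ sits in some $L^\lambda_x$, and then use the Calder\'on--Zygmund bound $\|\pi\|_{L^m}\lesssim \|v\|_{L^{2m}}^2$ to convert $|\pi|^{1-\theta}$ into $|v|^{2(1-\theta)}$. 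This turns everything except $\|\pi\|_{L^p_x}^\theta$ into pure $v$-quantities controlled by the $q$-energy, and the entire case analysis in the proof is the bookkeeping of finding an admissible $\theta$ (subject to $\theta\le\min\{1,\tfrac r2,\tfrac p2\}$ from H\"older, a lower bound from the interpolation constraint, and a two-sided bound from the Calder\'on--Zygmund/Lebesgue range) --- this is where the thresholds $g(\delta,\gamma)$ and $p\le\tfrac{2\delta}{\gamma}$ actually come from. Without the $\theta$-splitting and the CZ conversion you have no mechanism to trade pressure integrability against velocity integrability, and the argument cannot cover both the small-$p$ and small-$r$ regimes.
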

Note that in the above Theorem, we always have $\delta>\frac{3\gamma}{2+2\gamma}$. This means that the scaling of the velocity estimates in \eqref{higherintegrable2}, with respect to the Navier-Stokes scaling symmetry, is worse than that of the corresponding pressure assumption \eqref{pressuresupercrit}. Indeed, their equivalence would require $\delta=\frac{3\gamma}{2\gamma+4}.$ This discrepancy in scaling is caused by the supercritical nature of the pressure assumption \eqref{pressuresupercrit}, together with the fact that $\pi$ has quadratic dependence on $v$.
\subsection{Strategy For the Proof of Higher Integrability (Theorem \ref{thm:higherinteggen})}
As with many of the aforementioned papers, a key idea in proving Theorem \ref{thm:higherinteggen} is to test the Navier-Stokes equations with $v|v|^{1-2\delta}$ (for $\delta\in (0,\frac{1}{2})$. For $t,t_1\in (0,T^*)$, this yields
\ben\label{eq:estimate energy-generalintro}
&&E_{\delta}(v)(t):=\sup_{t_1\leq s\leq t}\int_{\mathbb{R}^3} |v(x,s)|^{3-2\delta}dx+\int_{t_1}^t\int_{\mathbb{R}^3}|\nabla( |v|^{\frac32-\delta})|^2dxds\nonumber\\
&\leq &C(\delta)\left|\int_{t_1}^t\int_{\mathbb{R}^3}|\pi||\nabla( |v|^{\frac32-\delta})| |v|^{\frac12-\delta}dxds\right|+E_{\delta}(v)(t_1).
\een
Our goal to prove Theorem \ref{thm:higherinteggen} is to use the assumption $\pi\in L^{r}_{t}L^{p}_{x}(\mathbb{R}^3\times (0,T^*))$ to estimate \eqref{eq:estimate energy-generalintro}. Specifically, we aim to obtain that for some $\theta\in [\frac{1}{2},1]$ and $\phi\in (0,1]$ that the following estimate holds true uniformly for all $t\in (t_1,T^*)$. Namely,
\begin{equation}\label{energyestintro}
E_{\delta}(v)(t)\leq C(\delta)\|\pi\|^{\theta}_{L^{r}_{t}L^{p}_{x}(\mathbb{R}^3\times (t_1,T^*))}(E_{\delta}(v)(t))^{\phi}+E_{\delta}(v)(t_1).
\end{equation}
Provided that $r<\infty$ (with the possibility that $t_1$ must be chosen sufficiently close to $T^*$), this then allows us to conclude that $E_{\delta}(v)(T^*)<\infty$. This gives the desired higher integrability.
\subsection{Strategy For the Reduction in the Hausdorff Dimension of the Singular Set}
Recall that in Caffarelli, Kohn and Nirenberg's paper \cite{CKN} $\varepsilon$-regularity criteria, formulated in terms of the quantities
$$r^{-1}\int\limits_{t-\frac{7}{8}r^2}^{t+\frac{1}{8}r^2}\int\limits_{B(x_0,r)} |\nabla v|^2 dxds $$
(see Proposition 2 in \cite{CKN}), are used in conjunction with a covering argument to obtain that the one-dimensional parabolic Hausdorff dimension of the space-time singular set is zero.
Notice that from the dimensional analysis perspective of \cite{CKN}, the space-time integral of $|\nabla v|^2$ has dimension 1. Since $|\nabla v|^2$ is controlled for certain classes of solutions of the Navier-Stokes equations, this explains why it is heuristically reasonable that the one-dimensional parabolic Hausdorff dimension of the space-time singular set is zero.

Adopting the same dimensional analysis perspective of \cite{CKN}, we see that for $q\in (2,3)$
$|\nabla v|^2|v|^{q-2}$ has dimension $3-q$. Due to Theorem  \ref{thm:higherinteggen}, one should heuristically expect that, under the assumptions of Theorems \ref{hausdorffdimreduce}-\ref{hausdorffdimreducegen2}, the dimension of the singular set is reduced.
In order to make these heuristics rigorous, it is necessary to formulate an $\varepsilon$-regularity criteria involving the space-time integral of $$|\nabla v|^2|v|^{q-2}. $$ We accomplish this by means of the proposition below.\begin{pro}\label{Bqsmallreg}
For every $q\in (2,3)$, there exists $\varepsilon_{q}\in (0,1)$ such that the following holds true.

Suppose that $(v,\pi)$ is a suitable weak solution\footnote{See `2.3 Solution Classes of the Navier-Stokes Equations' for a definition of `suitable weak solutions'.} to the Navier-Stokes equations in $Q(0,1):=B(0,1)\times (-1,0)$.
Furthermore, suppose that
\begin{equation}\label{Bqsmall}
\sup_{0<r<1}\Big(\frac{1}{r^{3-q}}\int\limits_{-r^2}^{0}\int\limits_{B(0,r)} |\nabla v|^2|v|^{q-2} dxds\Big)<\varepsilon_{q}.
\end{equation}
Then the above assumptions imply that $(x,t)=(0,0)$ is a regular point of $v$. In particular, there exists an $R\in (0,1)$ such that
$v\in L^{\infty}(B(0,R)\times (-R^2,0)).$
\end{pro}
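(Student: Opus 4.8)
\textbf{Proof proposal for Proposition \ref{Bqsmallreg}.}

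The plan is to deduce this $\varepsilon$-regularity criterion from the classical Caffarelli--Kohn--Nirenberg / Ladyzhenskaya--Seregin machinery by showing that smallness of the scale-invariant quantity $\sup_{0<r<1} r^{-(3-q)}\int_{Q(0,r)} |\nabla v|^2 |v|^{q-2}\,dxds$ forces smallness of the standard CKN excess quantity $r^{-1}\int_{Q(0,r)} |\nabla v|^2\,dxds$ at some definite scale (or, equivalently, smallness of a scaled Morrey-type norm of $v$), at which point the known $\varepsilon$-regularity result applies and gives boundedness of $v$ near $(0,0)$. Concretely, I would first set $u := |v|^{q/2}$, so that $|\nabla u|^2 \simeq |\nabla v|^2 |v|^{q-2}$ and hypothesis \eqref{Bqsmall} says precisely that $u$ lies in a scaled Dirichlet--Morrey space: $\sup_{0<r<1} r^{-(3-q)}\int_{Q(0,r)} |\nabla u|^2\,dxds < \varepsilon_q$. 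Since $3-q \in (0,1)$, this is a \emph{subcritical} Morrey exponent for $\nabla u$ on the parabolic cylinder, which via a parabolic Poincar\'e--Sobolev inequality controls oscillation-type quantities of $u$ on small cylinders.

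The key steps, in order, would be: (1) Interpolate. Using the energy bound built into the definition of a suitable weak solution (so $v \in L^\infty_t L^2_x \cap L^2_t \dot H^1_x$ locally, hence $u \in L^{2q/3}_t L^{2q/3}_x$-type bounds and $\nabla u \in L^2$ locally) together with \eqref{Bqsmall}, establish that $u$, and therefore $v$, belongs to a parabolic Morrey space with an exponent strictly better than the CKN threshold; the gain over scaling is exactly the factor $r^{(3-q) \cdot (\text{something positive})}$ coming from $q<3$. (2) Convert this into smallness of one of the standard CKN excess functionals — e.g. show $r^{-1}\int_{Q(0,r)} |\nabla v|^2\,dxds \to 0$ as $r \to 0$, or that $r^{-2}\int_{Q(0,r)} |v|^3\,dxds$ plus the pressure term can be made $< \varepsilon_{CKN}$ at some small radius $r_0$ depending only on $q$ and $\varepsilon_q$. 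This is where one needs to be careful to absorb the pressure: I would use the local pressure decomposition $\pi = \pi_{loc} + h$ with $\pi_{loc}$ a Calder\'on--Zygmund transform of $v \otimes v$ (controlled by the Morrey bound on $v$) and $h$ harmonic (controlled by interior estimates), exactly as in \cite{ladyzhenskayaseregin} and \cite{lin}. (3) Apply the CKN/Ladyzhenskaya--Seregin $\varepsilon$-regularity theorem at scale $r_0$ to conclude $v \in L^\infty(Q(0,R))$ for some $R \in (0,1)$.

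I expect the main obstacle to be step (2): controlling the pressure contribution in a way that is compatible with the weaker-than-critical information coming from \eqref{Bqsmall}. The quantity \eqref{Bqsmall} only gives subcritical control (exponent $3-q < 1$), so a naive scaling argument loses a power and the nonlinear/pressure terms do not obviously close; one must exploit the \emph{supremum over all scales} $0<r<1$ in \eqref{Bqsmall} — i.e. this is genuinely a Morrey-type hypothesis, not a single-scale one — to iterate and gain decay. The clean way to do this, following the structure of \cite{ladyzhenskayaseregin}, is to prove a decay estimate of the form $E(r/2) \le C\big(\theta^\alpha + \varepsilon_q^{\beta}\big) E(r) + (\text{lower order})$ for a suitable combined excess $E$ built from $\int |v|^3$, $\int |\nabla v|^2$ and the pressure term on $Q(0,r)$, with $\theta \in (0,1)$ a free parameter; choosing $\theta$ small and then $\varepsilon_q$ small makes the prefactor $<1$, yielding geometric decay of $E(r)$ and hence a Morrey bound strong enough to invoke standard $\varepsilon$-regularity. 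The fact that $q > 2$ (so that $|v|^{q-2}$ is a genuine, non-degenerate weight and $u = |v|^{q/2} \in H^1_{loc}$ makes sense) and $q < 3$ (so the exponent is subcritical but still gives a positive gain) are both used here. Routine parabolic embedding and interpolation inequalities are suppressed.

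\begin{proof}[Sketch]
See the detailed argument below.
\end{proof}
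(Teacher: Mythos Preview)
Your proposal is correct and follows essentially the same route as the paper: set up a decay iteration for a combined velocity-plus-pressure excess using the local energy inequality, the local pressure decomposition, and the Morrey-type hypothesis \eqref{Bqsmall} at all scales, then feed the resulting smallness into a known $\varepsilon$-regularity black box. The only cosmetic differences are that the paper iterates the specific quantity $\mathcal{E}(r)=A(v,r)^{3/2}+D(\pi,r)^{2}$ (scaled kinetic energy plus pressure), relies throughout on the splitting $|\nabla v|\,|v|=\big(|\nabla v|\,|v|^{(q-2)/2}\big)\,|v|^{(4-q)/2}$ to insert $B_{q}$ into the estimates for $S$, $C$ and $D$, and closes by invoking the Seregin--\v{S}ver\'{a}k criterion on $\sup_{0<R<R_*}A(v,R)$ rather than the CKN gradient/$L^3$ criterion you name.
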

\subsection{Final Remarks}
The methods of this paper cannot be used to treat the case when the pressure satisfies the scale-invariant assumption
\begin{equation}\label{pressurelorentznotLebesgue}
\|\pi\|_{L^{\infty}(0,T^*; L^{\frac{3}{2},\infty}(\mathbb{R}^3))}\leq M^2.
\end{equation}

In particular, we cannot obtain higher integrability by using interpolation arguments and  applying Theorem \ref{thm:higherinteggen}.

A different argument in \cite{barkerhigherinteg}, which uses a `small exponent', gives higher integrability up to $T^*$ when the pressure satisfies \eqref{pressurelorentznotLebesgue}. Specifically, \cite{barkerhigherinteg} shows that there exists a positive universal constant $C_{univ}^{(0)}$ such that for all $t_1\in (0,T^*)$:
\begin{equation}\label{higherintegfirstblowup}
\||v|^{\frac{q}{2}}\|_{L^{\infty}_{t}(t_1,T^*; L^{2}(\mathbb{R}^3))}^2+\int\limits_{t_1}^{T^*}\int\limits_{\mathbb{R}^3} |\nabla v|^2|v|^{q-2} dxds+\int\limits_{t_1}^{T^*}\int\limits_{\mathbb{R}^3} |\nabla |v|^{\frac{q}{2}}|^2 dxds<\infty\quad\textrm{with}\,\,q=2+\frac{C_{univ}^{(0)}}{M}.
\end{equation}
One can then argue as in the proof of Theorem \ref{hausdorffdimreduce} to show that the singular set $\sigma$ at the first blow-up time $T^*$ satisfies
$$\mathcal{H}^{1-\frac{C_{univ}^{(0)}}{M}}(\sigma),$$
under the assumption \eqref{pressurelorentznotLebesgue}.

Let us remark that after this paper, we still do not know if $$\pi\in L^{\infty}(0,T^*;L^{\frac{3}{2}-}(\mathbb{R}^3))$$ implies a reduction in the Hausdorff dimension of the singular set $\sigma$ at the first blow-up time $T^*$. Perhaps this indicates that the notion of a ``scaling gap'' may be too blunt when trying to understand the singular set. In order to understand how the singular set might transition between weak Leray-Hopf solutions and smooth solutions, it is arguably more appropriate to use quantities that capture oscillatory behavior. Such an example is given by Proposition \ref{Bqsmallreg}, on the level of the velocity field.
 \begin{section}{Preliminaries}
\subsection{General Notation}

Throughout this paper we adopt the Einstein summation convention. For arbitrary vectors $a=(a_{i}),\,b=(b_{i})$ in $\mathbb{R}^{n}$ and for arbitrary matrices $F=(F_{ij}),\,G=(G_{ij})$ in $\mathbb{M}^{n}$ we put
 $$a\cdot b=a_{i}b_{i},\,|a|=\sqrt{a\cdot a},$$
 $$a\otimes b=(a_{i}b_{j})\in \mathbb{M}^{n},$$
 $$FG=(F_{ik}G_{kj})\in \mathbb{M}^{n}\!,\,\,F^{T}=(F_{ji})\in \mathbb{M}^{n}\!,$$
 $$F:G=
 F_{ij}G_{ij}\,\,\,\textrm{and}
 \,\,\,|F|=\sqrt{F:F}.$$
For $x_0\in\mathbb{R}^n$ and $R>0$, we define the ball
\begin{equation}\label{balldef}
B(x_0,R):=\{x: |x-x_0|<R\}.
\end{equation}
For $z_0=(x_0,t_0)\in \mathbb{R}^n\times\mathbb{R}$ and $R>0$, we denote the parabolic cylinder by
\begin{equation}\label{paraboliccylinderdef}
Q(z_0,R):=\{(x,t): |x-x_0|< R,\,t\in (t_0-R^2,t_0)\}.
\end{equation}
For $S\subset \mathbb{R}^n$, $\varepsilon>0$ and $\lambda\in (0,\infty)$ we define
$$\mathcal{H}^{\lambda, \varepsilon}(S):=\inf\Big\{\sum_{i=1}^{\infty} (\textrm{diam}\,U_{i})^{\lambda}: S\subset \cup_{i=1}^{\infty} U_{i}\,\,\,\textrm{with}\,\,\,\textrm{diam}\,U_{i}\leq \varepsilon\Big\}. $$
We then define the $\lambda$-dimensional Hausdorff dimension of $S$ as
$$\mathcal{H}^{\lambda}(S):=\lim_{\varepsilon\rightarrow 0^+}\mathcal{H}^{\lambda, \varepsilon}(S). $$

  If $X$ is a Banach space with norm $\|\cdot\|_{X}$, then $L_{s}(a,b;X)$, with $a<b$ and $s\in[1,\infty)$,  will denote the usual Banach space of strongly measurable $X$-valued functions $f(t)$ on $(a,b)$ such that
$$\|f\|_{L^{s}(a,b;X)}:=\left(\int\limits_{a}^{b}\|f(t)\|_{X}^{s}dt\right)^{\frac{1}{s}}<+\infty.$$
The usual modification is made if $s=\infty$.
Sometimes we will denote $L^{p}(0,T; L^{q})$ by $L^{p}_{t}L^{q}_{x}$, $L^{p}(0,T; L^{q}_{x})$ or $L^{p}_{t}(0,T; L^q)$.

Let $C([a,b]; X)$ denote the space of continuous $X$ valued functions on $[a,b]$ with the usual norm. In addition, let $C_{w}([a,b]; X)$ denote the space of $X$ valued functions, which are continuous from $[a,b]$ to the weak topology of $X$.

\subsection{Lorentz Spaces}
Given a measurable subset $\Omega\subseteq\mathbb{R}^{n}$, let us define the Lorentz spaces.
For a measurable function $f:\Omega\rightarrow\mathbb{R}$ define:
\begin{equation}\label{defdistchapter2}
d_{f,\Omega}(\alpha):=\mu(\{x\in \Omega : |f(x)|>\alpha\}),
\end{equation}
where $\mu$ denotes the Lebesgue measure on $\mathbb{R}^n$.
 The Lorentz space $L^{p,q}(\Omega)$, with $p\in [1,\infty)$, $q\in [1,\infty]$, is the set of all measurable functions $g$ on $\Omega$ such that the quasinorm $\|g\|_{L^{p,q}(\Omega)}$ is finite. Here:

\begin{equation}\label{Lorentznormchapter2}
\|g\|_{L^{p,q}(\Omega)}:= \Big(p\int\limits_{0}^{\infty}\alpha^{q}d_{g,\Omega}(\alpha)^{\frac{q}{p}}\frac{d\alpha}{\alpha}\Big)^{\frac{1}{q}},
\end{equation}
\begin{equation}\label{Lorentznorminftychapter2}
\|g\|_{L^{p,\infty}(\Omega)}:= \sup_{\alpha>0}\alpha d_{g,\Omega}(\alpha)^{\frac{1}{p}}.
\end{equation}\\
Notice that if $p\in(1,\infty)$ and $q\in [1,\infty]$, there exists a norm, which is equivalent to the quasinorm defined above, for which $L^{p,q}(\Omega)$ is a Banach space.
For $p\in [1,\infty)$ and $1\leq q_{1}< q_{2}\leq \infty$, we have the following continuous embeddings
\begin{equation}\label{Lorentzcontinuousembeddingchapter2}
L^{p,q_1}(\Omega) \hookrightarrow  L^{p,q_2}(\Omega)
\end{equation}
and the inclusion is known to be strict.

We will make use of the following known interpolative inequality for Lorentz spaces. For a proof, see (for example) Proposition 1.1.14 of \cite{grafakos}.
\begin{pro}\label{interpolativepropertyLorentz}
Suppose that $1\leq p<r<q\leq\infty$ and
let $0<\theta<1$ be such that
$$\frac{1}{r}=\frac{\theta}{p}+\frac{1-\theta}{q}.$$
Then the assumption that $f\in L^{p,\infty}(\Omega)\cap L^{q,\infty}(\Omega)$ implies $f\in L^{r}(\Omega)$ with the estimate
\begin{equation}\label{interpolationestimate}
\|f\|_{L^{r}(\Omega)}\leq \Big(\frac{r}{r-p}+\frac{r}{q-r}\Big)^{\frac{1}{r}}\|f\|_{L^{p,\infty}(\Omega)}^{\theta}\|f\|_{L^{q,\infty}(\Omega)}^{1-\theta}.
\end{equation}
\end{pro}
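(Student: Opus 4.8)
The plan is to reduce everything to the distribution function $d_{f,\Omega}$ and the layer-cake (Cavalieri) representation of the $L^r$ norm. Since $r<q\le\infty$ we have $r<\infty$, so I may write
$$\|f\|_{L^r(\Omega)}^r = r\int_0^\infty \alpha^{r-1} d_{f,\Omega}(\alpha)\,d\alpha.$$
Writing $A:=\|f\|_{L^{p,\infty}(\Omega)}$ and $B:=\|f\|_{L^{q,\infty}(\Omega)}$, the definition \eqref{Lorentznorminftychapter2} of the weak norms gives the two pointwise bounds $d_{f,\Omega}(\alpha)\le (A/\alpha)^p$ and $d_{f,\Omega}(\alpha)\le (B/\alpha)^q$, valid for every $\alpha>0$; hence $d_{f,\Omega}(\alpha)\le\min\{(A/\alpha)^p,(B/\alpha)^q\}$. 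The whole proof is then an optimisation of how to split the $\alpha$-integral between these two competing bounds.

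Concretely, I would fix a cut-off $\alpha_0>0$, use the $p$-bound on $(0,\alpha_0)$ and the $q$-bound on $(\alpha_0,\infty)$, obtaining
$$\|f\|_{L^r(\Omega)}^r \le r\,A^p\int_0^{\alpha_0}\alpha^{r-1-p}\,d\alpha + r\,B^q\int_{\alpha_0}^\infty \alpha^{r-1-q}\,d\alpha = \frac{r\,A^p}{r-p}\,\alpha_0^{r-p} + \frac{r\,B^q}{q-r}\,\alpha_0^{r-q}.$$
Here the strict inequalities $p<r<q$ are exactly what is needed for convergence: $r-p>0$ makes the first integral finite at the origin, and $r-q<0$ makes the second finite at infinity. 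This is the only place where the hypothesis $p<r<q$ is genuinely used, and getting both truncated tails to converge simultaneously is the crux of the argument; everything else is bookkeeping with exponents.

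It then remains to choose $\alpha_0$ optimally. The natural balancing choice is $\alpha_0:=(B^q/A^p)^{1/(q-p)}$, which makes both terms on the right equal to $A^{p(q-r)/(q-p)}B^{q(r-p)/(q-p)}$. Finally I would verify, using the relation $1/r=\theta/p+(1-\theta)/q$ (equivalently $\theta=p(q-r)/(r(q-p))$), that $p(q-r)/(q-p)=r\theta$ and $q(r-p)/(q-p)=r(1-\theta)$, so that the common value is precisely $A^{r\theta}B^{r(1-\theta)}$. Collecting the two identical prefactors yields $\|f\|_{L^r(\Omega)}^r\le \big(\tfrac{r}{r-p}+\tfrac{r}{q-r}\big)A^{r\theta}B^{r(1-\theta)}$, and taking $r$-th roots gives \eqref{interpolationestimate}. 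The limiting case $q=\infty$ is handled identically, except that $B=\|f\|_{L^\infty(\Omega)}$ forces $d_{f,\Omega}(\alpha)=0$ for $\alpha>B$, so the upper integral is simply truncated at $B$, $\theta=p/r$, and no balancing of $\alpha_0$ is needed.
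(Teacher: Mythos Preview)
Your argument is correct and is precisely the standard layer-cake proof. Note, however, that the paper does not supply its own proof of this proposition: it simply cites Proposition~1.1.14 of Grafakos \cite{grafakos}. Your proof is essentially the one given there, so there is nothing to compare.

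One minor remark: the balancing choice $\alpha_0=(B^q/A^p)^{1/(q-p)}$ tacitly assumes $A,B\in(0,\infty)$; the degenerate cases $A=0$ or $B=0$ (where $f=0$ a.e.) are trivial and can be disposed of in a single sentence at the outset.
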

\begin{subsection}{Solution Classes of the Navier-Stokes Equations}

 We say $v$ is a \emph{finite-energy solution} or a \emph{weak Leray-Hopf solution} to the Navier-Stokes equations on $(0,T)$ if $v\in C_{w}([0,T]; L^{2}_{\sigma}(\mathbb{R}^3))\cap L^{2}(0,T; \dot{H}^{1}(\mathbb{R}^3))$ and if $v$ satisfies the global energy inequality
$$\|v(\cdot,t)\|_{L^{2}(\mathbb{R}^3)}^2+2\int\limits_{0}^{t}\int\limits_{\mathbb{R}^3}|\nabla v|^2dxdt'\leq \|v(\cdot,0)\|_{L^{2}(\mathbb{R}^3)}^2$$
for all $t\in [0,T)$.

Let $\Omega\subseteq\mathbb{R}^3$. We say that $(v,\pi)$ is a \textit{suitable weak solution} to the Navier-Stokes equations
in $\Omega\times (T_{1},T)$ if it fulfills the properties described in \cite{gregory2014lecture} (Definition 6.1 p.133 in \cite{gregory2014lecture}).
\end{subsection}

\end{section}
\section{Proof of higher integrability (Theorem \ref{thm:higherinteggen})}
Note that the fact that $v$ is a weak Leray-Hopf solution satisfying \eqref{vbounded2} implies that $v$ and its associated pressure $\pi$ are sufficiently smooth on $\mathbb{R}^3\times (0,T^*)$. For each $t\in (0,T^*)$, $v(\cdot,t)$ belongs to every $L^{\alpha}(\mathbb{R}^3)$ with $\alpha \in [2,\infty]$. For each $t\in (0,T^*)$, $\pi(\cdot,t)$ belongs to every $L^{\beta}(\mathbb{R}^3)$ with $\beta \in (1,\infty)$.
Hence all calculations performed below can be rigorously justified.

For $\delta\in (0,\frac12)$, to be decided, testing the Navier-Stokes equations with $v|v|^{1-2\delta}$ yields
\ben\label{eq:estimate energy-general}
&&\sup_{t_1<s<t}\int_{\mathbb{R}^3} |v(x,s)|^{3-2\delta}dx+\int_{t_1}^t\int_{\mathbb{R}^3}|\nabla( |v|^{\frac32-\delta})|^2dxds\nonumber\\
&\leq &C(\delta)\left|\int_{t_1}^t\int_{\mathbb{R}^3}|\pi||\nabla( |v|^{\frac32-\delta})| |v|^{\frac12-\delta}dxds\right|+\int_{\mathbb{R}^3} |v(x,t_1)|^{3-2\delta}dx
\een

First of all,  H\"{o}lder and Calder\'{o}n-Zygmund inequalities yield that
\beno
I&=&\int_{t_1}^t\int_{\mathbb{R}^3}|\pi||\nabla( |v|^{\frac32-\delta})| |v|^{\frac12-\delta}dxds\\
&=& \int_{t_1}^t\int_{\mathbb{R}^3}|\pi|^{\theta}|\nabla( |v|^{\frac32-\delta})||\pi|^{1-\theta} |v|^{\frac12-\delta}dxds\\
&\leq &\int_{t_1}^t\|\pi\|_{L^p_x}^\theta \|\nabla( |v|^{\frac32-\delta})\|_{L^2_x} \||\pi|^{1-\theta}  |v|^{\frac12-\delta}\|_{L^\lambda_x}    ds\\
&\leq &\int_{t_1}^t\|\pi\|_{L^p_x}^\theta \|\nabla( |v|^{\frac32-\delta})\|_{L^2_x} \|\pi\|_{L_x^{\frac{\lambda}{2}( \frac52-2\theta-\delta)}}^{1-\theta}  \|v\|_{L^{\lambda( \frac52-2\theta-\delta)}_x}^{\frac12-\delta}   ds\\
&\leq &C_{\lambda,\delta,\theta}\int_{t_1}^t\|\pi\|_{L^p_x}^\theta \|\nabla( |v|^{\frac32-\delta})\|_{L^2_x}   \||v|^{\frac32-\delta}\|_{L^{\frac{\lambda( \frac52-2\theta-\delta)}{\frac32-\delta}}_x}^{\frac{\frac52-2\theta-\delta}{\frac32-\delta}}   ds\\
&\leq &C_{\lambda,\delta,\theta}\|\pi\|_{L_t^rL^p_x(\mathbb{R}^3\times (t_1,T^*))}^\theta \|\nabla( |v|^{\frac32-\delta})\|_{L^2_{t,x}(\mathbb{R}^3\times (t_1,t))}   \||v|^{\frac32-\delta}\|_{L^{\frac{\mu( \frac52-2\theta-\delta)}{\frac32-\delta}}_tL^{\frac{\lambda( \frac52-2\theta-\delta)}{\frac32-\delta}}_x (\mathbb{R}^3\times (t_1,t))}^{\frac{\frac52-2\theta-\delta}{\frac32-\delta}},
\eeno
where $\lambda,\mu\geq 1$ and $\theta\in [\frac12,1] $ are to be decided. Also,  we need the following conditions
\ben\label{eq:A}
\frac{\theta}{p}+\frac12+\frac{1}{\lambda}=1,
\een
\ben\label{eq:B}
\frac{\theta}{r}+\frac12+\frac{1}{\mu}=1,
\een
and for the Calder\'{o}n-Zygmund  estimates
\ben\label{eq:D-}
1<\frac{\lambda}{2}( \frac52-2\theta-\delta)
\een
Moreover, we also need the following range to close the energy estimate
\ben\label{eq:C}
{\frac{\frac52-2\theta-\delta}{\frac32-\delta}}\leq 1,
\een
\ben\label{eq:D}
(\frac32-\delta)\leq \frac{\lambda}{2}( \frac52-2\theta-\delta)\leq 3(\frac32-\delta)
\een
and
\ben\label{eq:E}
\frac2{\mu}+\frac3{\lambda}\geq \frac32\left(\frac{\frac52-2\theta-\delta}{\frac32-\delta}\right)
\een

First, for $\delta\in (0,\frac{1}{2})$ the condition of (\ref{eq:D-}) is included in (\ref{eq:D}) and (\ref{eq:C}) holds obviously for $\theta\geq \frac12$. By (\ref{eq:A}) and (\ref{eq:B}), we have
\ben\label{eq:theta}
\theta\in [\frac12, \min\{1,\frac{r}{2}, \frac{p}{2}\}]
\een
Second, due to $\frac2r+\frac3p=2+\gamma$, it follows from (\ref{eq:A}) and (\ref{eq:B}) that
\beno\label{eq:mu-lambda}
\frac2{\mu}+\frac3{\lambda}=\frac52-\theta(\frac2r+\frac3p)=\frac52-\theta(2+\gamma).
\eeno
This and  (\ref{eq:E}) yields that
\beno
\frac52-\theta(2+\gamma)\geq \frac32\left(\frac{\frac52-2\theta-\delta}{\frac32-\delta}\right),
\eeno
which implies that
\ben\label{eq:delta lower}
\theta\geq \frac{\delta}{[2\delta-\gamma(\frac32-\delta)]}
\een
and $\gamma< \frac{4\delta}{3-2\delta}$.
Especially, for $0<\gamma\leq \frac{2\delta}{3-2\delta}$ there holds
\beno\label{eq:gamma bound}
\frac12<\frac{\delta}{[2\delta-\gamma(\frac32-\delta)]}\leq 1.
\eeno
Thus the above conditions (\ref{eq:A})-(\ref{eq:delta lower}) except (\ref{eq:D}) imply that
\begin{equation}\label{eq:gamma bound1}
\frac12<\frac{\delta}{[2\delta-\gamma(\frac32-\delta)]}\leq\theta\leq   \min\{1,\frac{r}{2}, \frac{p}{2}\},\quad  0<\gamma\leq \frac{2\delta}{3-2\delta}.
\end{equation}

Finally, (\ref{eq:D})  and (\ref{eq:A}) imply
\ben\label{2sided}
(1-\frac{2\theta}{p})(\frac32-\delta)\leq \frac52-2\theta-\delta\leq (3- \frac{6\theta}{p} )(\frac32-\delta).
\een
The first part of \eqref{2sided}, together with (\ref{eq:gamma bound}) and $p>\frac{3}{2+\gamma}\geq \frac{3}{2}-\delta$, imply that
\ben\label{eq:D1}
\left(2-\frac2p(\frac32-\delta)\right)\frac{\delta}{[2\delta-\gamma(\frac32-\delta)]} \leq 1.
\een
Besides, the second one of (\ref{eq:D}) is
\ben\label{eq:D2}
\theta(9-6\delta-2p)\leq (2-2\delta)p
\een

Our main aim is to determine suitable $\delta\in(0,\frac12)$ and $\theta\in(\frac12,1]$ satisfying the conditions of \eqref{eq:gamma bound}, (\ref{eq:D1}) and (\ref{eq:D2}) for fixed $p$ and $r$, since $\gamma$ is decided by $p$ and $r$.

{\bf \underline{Case I: $r\geq 2$.}} At this time $\frac{3}{2+\gamma}<p\leq\frac{3}{1+\gamma}$. Moreover, (\ref{eq:D1}) holds if
\beno
\left(2-\frac23(1+\gamma)(\frac32-\delta)\right)\frac{\delta}{[2\delta-\gamma(\frac32-\delta)]} \leq 1.
\eeno
which is
\beno
0<\gamma\leq \frac{2\delta}{3-2\delta}.
\eeno
Hence  $\delta$ and $\theta$ are decided by (\ref{eq:gamma bound}) and (\ref{eq:D2}).

 In Case I below, we take any fixed $\delta\in (\frac{3\gamma}{2+2\gamma},\frac{1}{2})$ and $p\in (g(\delta,\gamma),\frac{3}{1+\gamma}].$ We show that (\ref{eq:gamma bound}) and (\ref{eq:D2}) can be satisfied with an appropriate $\theta$. In order to demonstrate this, we separately consider the cases $\frac52-\delta\leq p<\frac92-3\delta$ (Case I.1), $\frac{9-6\delta}{4-2\delta}\leq p<\frac52-\delta$ (Case I.2) and $g(\delta,\gamma)<p<\frac{9-6\delta}{4-2\delta}$ (Case I.3).

{\bf Case I.1: $\frac52-\delta\leq p<\frac92-3\delta$.}

First note that this case is non-empty since for $\gamma\in (0,\frac{1}{2})$ we have
$$\delta\geq \frac{3\gamma}{2+2\gamma}=\max\Big(\frac{5\gamma-1}{2+2\gamma},\frac{3\gamma}{2+2\gamma}\Big).$$
 In this case, we have
\beno
1< \frac{p}{2}\leq \frac{(2-2\delta)p}{9-6\delta-2p}
\eeno
and  (\ref{eq:D2}) holds. One can take $\theta=1$.

{\bf Case I.2: $\frac{9-6\delta}{4-2\delta}\leq p<\frac52-\delta$.}
Then  (\ref{eq:D2}) becomes
\beno
\frac{(2-2\delta)p}{9-6\delta-2p} \geq 1\geq \theta
\eeno
and $\frac{p}{2}\geq 1$.
One can still take $\theta=1$.

{\bf Case I.3: $g(\delta,\gamma)<p<\frac{9-6\delta}{4-2\delta}$.} Here
\beno
g(\delta,\gamma):= \frac{(9-6\delta)\delta}{(2-2\delta)[2\delta-\gamma (\frac32-\delta)]+2\delta}=\frac{3\delta}{2\delta-(1-\delta)\gamma}.
\eeno
One can verify that for  $0<\gamma< \frac{2\delta}{3-2\delta}$ there holds
\beno
g(\delta,\gamma)<\frac{9-6\delta}{4-2\delta}
\eeno
and
\beno
g(\delta,\gamma)>\frac{3}{2+\gamma}
\eeno
for $\delta,\gamma\in (0,\frac12)$. Hence, for
\beno
\frac{3\delta}{2\delta-(1-\delta)\gamma}\doteq g(\delta,\gamma)<p<\frac{9-6\delta}{4-2\delta}
\eeno
Then
(\ref{eq:D2}) becomes
\beno
\theta\leq \frac{(2-2\delta)p}{9-6\delta-2p}<1
\eeno
Furthermore,  there holds \beno
\frac{\delta}{[2\delta-\gamma(\frac32-\delta)]}<\frac{(2-2\delta)p}{9-6\delta-2p}
\eeno
 provided that
\beno
p> g(\delta,\gamma).
\eeno
This and (\ref{eq:gamma bound}) imply
\beno
\frac{\delta}{[2\delta-\gamma(\frac32-\delta)]}\leq \theta\leq \frac{(2-2\delta)p}{9-6\delta-2p}
\eeno
is reasonable, that is the set of $\theta$ is not empty. Further notice that since $p\leq \frac{5}{2}-\delta$ in this case, we have
$$\frac{(2-2\delta)p}{9-6\delta-2p}\leq \min\{1,\frac{p}{2},\frac{r}{2}\}.
$$

Thus one can choose $\theta$ as follows:
\beno
\theta=\frac12\{\frac{\delta}{[2\delta-\gamma (\frac32-\delta)]}+  \frac{(2-2\delta)p}{9-6\delta-2p}  \}\in(\frac12,1).
\eeno

{\bf \underline{Case II: $1<r< 2$.}}
 At this time $\frac{3}{1+\gamma}<p<\frac{3}{\gamma}$. Then  (\ref{eq:gamma bound}) becomes
\beno
\frac12<\frac{\delta}{[2\delta-\gamma(\frac32-\delta)]}\leq\theta\leq   \frac{r}{2},\quad  0<\gamma\leq \frac{2\delta}{3-2\delta}.
\eeno
This is equivalent to
\beno
\frac{\delta}{[2\delta-\gamma(\frac32-\delta)]}\leq \theta\leq \frac{1}{2+\gamma-\frac3p}\quad\textrm{and}\quad 0<\gamma\leq \frac{2\delta}{3-2\delta},
\eeno
where the choice of $\theta$ is not empty provided that $p\leq \frac{2\delta}{\gamma}$. Then for $\frac{3}{1+\gamma}<p\leq\frac{2\delta}{\gamma}$, we have
\ben\label{eq:gamma bound2}
\frac12<\frac{\delta}{[2\delta-\gamma(\frac32-\delta)]}\leq\theta\leq   \frac{r}{2}<1,\quad  0<\gamma< \frac{2\delta}{3-2\delta}.
\een

Moreover,
 (\ref{eq:D1}) holds for any  $\frac{3}{1+\gamma}<p\leq\frac{2\delta}{\gamma}$. Indeed, it is enough that (\ref{eq:D1}) holds for $p=\frac{2\delta}{\gamma}$:
\beno
(2-\frac{\gamma}{\delta}(\frac32-\delta))\frac{\delta}{[2\delta-\gamma(\frac32-\delta)]} \leq1
\eeno
which is obvious.

Thus for $\frac{3}{1+\gamma}<p\leq\frac{2\delta}{\gamma}$, it suffices to consider the conditions of (\ref{eq:gamma bound2}) and (\ref{eq:D2}).

In Case II below, we take any fixed $p\in(\frac{3}{1+\gamma},\frac{2\delta}{\gamma}]$ and $\delta\in (\frac{3\gamma}{2+2\gamma},\frac{1}{2})$. We show that (\ref{eq:D2})-(\ref{eq:gamma bound2}) can be satisfied with an appropriate $\theta$. We first consider $\delta\in [\frac{9\gamma}{4+6\gamma},\frac{1}{2})$ in Case II.1-II.2. In Case II.3, we finally treat $\delta\in (\frac{3\gamma}{2+2\gamma},\frac{9\gamma}{4+6\gamma})$.

{\bf Case II.1: $\frac92-3\delta\leq p\leq\frac{2\delta}{\gamma} $ with $\delta\in [\frac{9\gamma}{4+6\gamma},\frac{1}{2})$.} At this time
\beno
0<\gamma\leq \frac{2\delta}{\frac92-3\delta}<\frac{2\delta}{3-2\delta}.
\eeno
Moreover, (\ref{eq:D2}) holds. It is enough that
\beno
\frac12<\frac{\delta}{[2\delta-\gamma (\frac32-\delta)]}\leq \theta\leq \frac{r}{2}.
\eeno
Then one can take
\beno
\theta=\frac12\left\{\frac{\delta}{[2\delta-\gamma (\frac32-\delta)]}+\frac{r}{2} \right\}\in(\frac12,1).
\eeno

{\bf Case II.2: $\frac{3}{1+\gamma}<p<\frac92-3\delta$ with $\delta\in [\frac{9\gamma}{4+6\gamma},\frac{1}{2})$.} Note that this case is only non-empty for $\delta\in [\frac{9\gamma}{4+6\gamma},\frac{3\gamma+1}{2+2\gamma})$.

 We see that (\ref{eq:D2}) implies
\beno
\theta\leq \frac{(2-2\delta)p}{9-6\delta-2p}.
\eeno
 Using similar arguments as in {\bf Case I.3}, we see that this is compatible with (\ref{eq:gamma bound2}) if
\beno
\frac{\delta}{[2\delta-\gamma(\frac32-\delta)]}<\frac{(2-2\delta)p}{9-6\delta-2p},
\eeno
which holds if
\beno
p>\frac{3\delta}{2\delta-(1-\delta)\gamma}.
\eeno
It's easy to verify that
\beno
\frac{3\delta}{2\delta-(1-\delta)\gamma}<\frac{3}{1+\gamma}
\eeno
using
\beno
\gamma<\frac{2\delta}{3-2\delta}.
\eeno
Thus for $\frac{3}{1+\gamma}<p<\frac92-3\delta$ with $\delta\in [\frac{9\gamma}{4+6\gamma},\frac{1}{2})$, one can
take
\beno
\theta=\frac12\left\{\frac{\delta}{[2\delta-\gamma (\frac32-\delta)]}+\min\{\frac{r}{2},\frac{(2-2\delta)p}{9-6\delta-2p}  \}\right\}\in(\frac12,1).
\eeno

{\bf Case II.3: $\frac{3}{1+\gamma}< p\leq\frac{2\delta}{\gamma} $ with $\delta\in (\frac{3\gamma}{2+2\gamma},\frac{9\gamma}{4+6\gamma})$.} At this time $\frac{2\delta}{\gamma}\leq \frac92-3\delta $ and   (\ref{eq:D2})-(\ref{eq:gamma bound2}) implies
\beno
\frac{\delta}{[2\delta-\gamma(\frac32-\delta)]}\leq \theta\leq \frac{(2-2\delta)p}{9-6\delta-2p},
\eeno
which is meaningful if
\beno
p>\frac{3\delta}{2\delta-(1-\delta)\gamma}.
\eeno
Note that
\beno
\frac{3\delta}{2\delta-(1-\delta)\gamma}<\frac{3}{1+\gamma}
\eeno
due to
$\gamma<\frac{2\delta}{3-2\delta}<\delta.$
Thus for $\frac{3}{1+\gamma}<p\leq \frac{2\delta}{\gamma}$ with $\delta\in (\frac{3\gamma}{2+2\gamma},\frac{9\gamma}{4+6\gamma})$, one can
take
\beno
\theta=\frac12\left\{\frac{\delta}{[2\delta-\gamma (\frac32-\delta)]}+\min\{\frac{r}{2},\frac{(2-2\delta)p}{9-6\delta-2p}  \}\right\}\in(\frac12,1).
\eeno

The proof is complete.

\section{Proof of the $\varepsilon$- regularity criterion (Proposition \ref{Bqsmallreg})}
In this section, it is necessary to introduce the following notation for $z_0=(x_0,t_0)$, $v:Q(z_0,R)\rightarrow\mathbb{R}^3$ and $\pi:Q(z_0,R)\rightarrow \mathbb{R}$. We define (for $r\in (0,R)$):
\begin{equation}\label{Adef}
A(v,z_0,r):=\esssup_{t_0-r^2\leq t\leq t_{0}}\frac{1}{r}\int\limits_{B(x_0,r)} |v(x,t)|^2 dx ,
\end{equation}
\begin{equation}\label{Edef}
E(v,z_0,r):= \frac{1}{r}\int\limits_{Q(z_0,r)} |\nabla v|^2 dxds
,
\end{equation}
\begin{equation}\label{Cdef}
C(v,z_0,r):=\frac{1}{r^2}\int\limits_{Q(z_0,r)} |v|^3 dxds
,
\end{equation}
\begin{equation}\label{Sdef}
S(v,z_0,r):=\frac{1}{r^2}\int\limits_{Q(z_0,r)} ||v|^2-[|v|^2]_{B(x_0,r)}||v|dxds
,
\end{equation}
\begin{equation}\label{Ddef}
D(\pi, z_0,r):=\frac{1}{r^2}\int\limits_{Q(z_0,r)} |\pi-[\pi]_{B(x_0,r)}|^{\frac{3}{2}} dxds.
\end{equation}
Note that in \eqref{Sdef}-\eqref{Ddef}, we use the notation
$$[f]_{B(x_0,r)}=\frac{1}{\mu(B(x_0,r))}\int\limits_{B(x_0,r)} f dx, $$
where $\mu(B(x_0,r))$ denotes the Lebesgue measure of $B(x_0,r)$.
For $q\in (2,3)$ we also introduce the notation
\begin{equation}\label{Bqdef}
B_{q}(v,z_0,r):=\frac{1}{r^{3-q}}\int\limits_{Q(z_0,r)} |\nabla v|^2|v|^{q-2} dxds.
\end{equation}
In the above, when $z_0=(0,0)$ we will write $A(v,r)$, $B(v,r)$ etc, instead of $A(v,0,r)$ and $B(v,0,r)$.

The goal of this section is to prove Proposition \ref{Bqsmallreg} (stated in the Introduction).
In proving Proposition \ref{Bqsmallreg}, we will utilize the following Lemma, which is stated and proven in \cite{sereginsverakpressure} (specifically Lemma 3.3 in \cite{sereginsverakpressure}).
\begin{lemma}\label{smallkineticimpliesreg}
There exists a positive universal constant $\varepsilon_{*}$ such that the following holds true.

Suppose that $(v,\pi)$ is a suitable weak solution to the Navier-Stokes equations in $Q(0,1)$.
Furthermore, suppose that there exists an $R_{*}\in(0,1)$ such that
\begin{equation}\label{Asmall}
\sup_{0<R<R_{*}} A(v,R)<\varepsilon_{*}.
\end{equation}
Then the above assumptions imply that $(x,t)=(0,0)$ is a regular point of $v$.

In particular, there exists an $r\in (0,1)$ such that
$v\in L^{\infty}(Q(0,r)).$
\end{lemma}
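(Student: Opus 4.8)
The plan is to reduce the smallness of the scale-invariant kinetic-energy quantity $A(v,R)$ to a standard Caffarelli--Kohn--Nirenberg-type $\varepsilon$-regularity criterion, by propagating smallness through the remaining natural scale-invariant quantities $E$, $C$ and $D$ via three ingredients: the local energy inequality, a multiplicative interpolation inequality, and a pressure decay estimate. First I would record the Ladyzhenskaya/Gagliardo--Nirenberg interpolation inequality which, for divergence-free $v$ on $Q(z_0,r)$, yields
$$C(v,z_0,r)\leq C_0\big(A(v,z_0,r)^{\frac34}E(v,z_0,r)^{\frac34}+A(v,z_0,r)^{\frac32}\big).$$
This is obtained by applying the interior $L^3$-interpolation inequality in space, integrating in time, and using Hölder; its significance is that $C$ is small as soon as $A$ is small and $E$ is merely bounded.

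Next I would exploit that $(v,\pi)$ is a suitable weak solution, so that the local energy inequality holds. Testing it with a cutoff equal to $1$ on $Q(z_0,r)$ and supported in $Q(z_0,2r)$, using $\nabla\cdot v=0$ to subtract the mean $[\pi]_{B(x_0,2r)}$ from the pressure term, and applying Hölder, I would obtain a bound of the schematic form
$$E(v,z_0,r)\leq C_1\big(A(v,z_0,2r)+C(v,z_0,2r)+C(v,z_0,2r)^{\frac13}D(\pi,z_0,2r)^{\frac23}\big).$$
The remaining ingredient is a pressure estimate. Writing $-\Delta\pi=\partial_i\partial_j(v_iv_j)$ and splitting $\pi=\pi^{\mathrm{loc}}+\pi^{\mathrm{harm}}$ on $B(x_0,r)$, with $\pi^{\mathrm{loc}}$ the Newtonian potential of the localized source (bounded in $L^{3/2}$ by $\|v\|_{L^3}^2$ through Calderón--Zygmund) and $\pi^{\mathrm{harm}}$ harmonic in space, the interior oscillation decay of harmonic functions gives, for $\theta\in(0,\tfrac12)$ and some $\beta>0$,
$$D(\pi,z_0,\theta r)\leq C_2\big(\theta^{-2}C(v,z_0,r)+\theta^{\beta}D(\pi,z_0,r)\big).$$

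Feeding the interpolation inequality into the energy bound and combining with the pressure decay, and writing $\Phi(r):=E(v,z_0,r)+C(v,z_0,r)+D(\pi,z_0,r)$, I would derive a decay inequality of the form
$$\Phi(\theta r)\leq \tfrac12\,\Phi(r)+K(\varepsilon_*),\qquad r<\tfrac{R_*}{2},$$
valid once $\theta\in(0,\tfrac12)$ is fixed small enough to beat the harmonic constant $C_2\theta^{\beta}$, and where $K(\varepsilon_*)\to0$ as $\varepsilon_*\to0$; the uniform smallness of $A$ enters precisely through the factors $A^{3/4},A^{3/2}$, which also let me absorb the $E(2r)^{3/4}$ contribution by Young's inequality. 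Since $(v,\pi)$ lies in the natural local energy classes, $\Phi(r_0)<\infty$ at some fixed initial scale $r_0<R_*$, so iterating the decay inequality gives $\limsup_{r\to0}\Phi(r)\leq 2K(\varepsilon_*)$, which is as small as desired by choosing $\varepsilon_*$ small. With $C$ (equivalently the combination $C+D$) thereby small at some scale, I would invoke the standard Ladyzhenskaya--Seregin / CKN $\varepsilon$-regularity criterion to conclude that $(0,0)$ is regular, hence $v\in L^{\infty}(Q(0,r))$ for some $r\in(0,1)$. The main obstacle is the coupling to the nonlocal pressure: the excess $D$ at small scales cannot be controlled by the velocity alone and must be handled through the harmonic decomposition, and closing the iteration requires balancing the unfavorable factor $\theta^{-2}$ on the velocity term against the favorable $\theta^{\beta}$ on the harmonic tail while keeping the system contractive — this is where the precise exponents, rather than the schematic ones displayed above, must be tracked carefully.
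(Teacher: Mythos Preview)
The paper does not give its own proof of this lemma: it simply quotes it as Lemma~3.3 of Seregin--\v{S}ver\'{a}k \cite{sereginsverakpressure} and uses it as a black box. Your sketch is the standard route (interpolation $C\lesssim A^{3/4}E^{3/4}+A^{3/2}$, local energy inequality, harmonic/local pressure decomposition with decay, iteration to a small-$C+D$ criterion) and is essentially the argument one finds in \cite{sereginsverakpressure}; so as a self-contained proof it is correct and agrees with the literature, but there is nothing in the present paper to compare it against.
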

In order to prove Proposition \ref{Bqsmallreg}, we must first prove three preliminary estimates, which we state as separate Lemmas.

\begin{lemma}\label{Sestlem}
For every $q\in (2,3)$, there exists $c^{(0)}_{q}>0$ such that the following holds true.

Suppose that $(v,\pi)$ is a suitable weak solution to the Navier-Stokes equations in $Q(0,1)$.
Furthermore, suppose that
\begin{equation}\label{Bqfinite2}
 B_{q}(v,1)<\infty.
\end{equation}
Then for every $0<r\leq 1$, we have the estimate
\begin{equation}\label{Sest}
S(v,r)\leq c^{(0)}_{q} (A(v,r))^{\frac{4-q}{4}} (C(v,r))^{\frac{1}{3}}(B_{q}(v,r))^{\frac{1}{2}}.
\end{equation}
\end{lemma}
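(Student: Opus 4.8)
\textbf{Proof proposal for Lemma~\ref{Sestlem}.}
The plan is to estimate the integrand of $S(v,r)$ by interpolation, isolating the oscillation factor $||v|^2-[|v|^2]_{B(x_0,r)}|$, which is where the gradient will enter via a Poincar\'{e}--Sobolev inequality, and the pure power factor $|v|$, which will be absorbed into $A$ and $C$. Concretely, I would first apply H\"{o}lder in space on each time slice to split
$$\int_{B(0,r)} ||v|^2-[|v|^2]_{B(0,r)}|\,|v|\,dx \leq \Big\|\,|v|^2-[|v|^2]_{B(0,r)}\Big\|_{L^{a}(B(0,r))}\,\big\||v|\big\|_{L^{a'}(B(0,r))}$$
for a suitable exponent $a$ to be chosen. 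For the oscillation term I would then invoke the Sobolev--Poincar\'{e} inequality on the ball $B(0,r)$ applied to the function $w:=|v|^2$, noting $|\nabla w|=|\nabla(|v|^2)|\leq 2|v|\,|\nabla v|$, so that $\|w-[w]_{B(0,r)}\|_{L^{a}(B(0,r))}\lesssim r^{1+3/a-3/2}\|\,|v|\,|\nabla v|\,\|_{L^{3/2}(B(0,r))}$ if I take $a=3$ (the exponent making the Sobolev embedding $W^{1,3/2}\hookrightarrow L^{3}$ in dimension three scale-invariantly). Then $\|\,|v|\,|\nabla v|\,\|_{L^{3/2}(B(0,r))}$ is split once more by H\"{o}lder: writing $|v|\,|\nabla v| = \big(|\nabla v|^2|v|^{q-2}\big)^{1/2}\,|v|^{(4-q)/2}$, one gets $\|\,|v|\,|\nabla v|\,\|_{L^{3/2}}\leq \big\|\,|\nabla v|^2|v|^{q-2}\,\big\|_{L^{1}(B(0,r))}^{1/2}\,\big\||v|^{(4-q)/2}\big\|_{L^{b}(B(0,r))}$ for the conjugate exponent $b$ determined by $\tfrac{1}{2}+\tfrac{1}{b}=\tfrac{2}{3}$, i.e.\ $b=6$, so $\||v|^{(4-q)/2}\|_{L^6} = \||v|\|_{L^{3(4-q)}}^{(4-q)/2}$; for $q\in(2,3)$ this exponent $3(4-q)$ lies in $(3,6)$, so it interpolates between $L^2$ (controlled by $A$) and $L^3$ (controlled by $C$).

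The second step is to carry out the time integration. After the spatial H\"{o}lder and Sobolev steps, the integrand in time will be a product of three time-dependent quantities; I would apply H\"{o}lder in $t$ over $(-r^2,0)$ with the exponent on the $\big\||\nabla v|^2|v|^{q-2}\big\|_{L^1_x}^{1/2}$ factor matched so that, after raising to the appropriate power, it reproduces exactly the space-time integral $\int_{Q(0,r)}|\nabla v|^2|v|^{q-2}$ to the power $1/2$. The remaining factors — powers of $\int_{B(0,r)}|v|^2$ and $\int_{B(0,r)}|v|^3$ — are then pulled out: the $L^\infty_t$ norm of the former is exactly $r\,A(v,r)$, and the time integral of a power of the latter feeds into $r^2\,C(v,r)$ after another application of H\"{o}lder in time to convert $\|\,|v|\,\|_{L^3_x}$ powers (from the interpolation) into the $L^3_{t,x}$ norm. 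Throughout, one must track the powers of $r$ carefully; the claimed exponents $\tfrac{4-q}{4}$ on $A$, $\tfrac13$ on $C$ and $\tfrac12$ on $B_q$ sum in the dimensional-analysis bookkeeping to make $S$ scale correctly, which is the consistency check I would use to fix all the H\"{o}lder exponents without guesswork: the exponents on $A,C,B_q$ are forced by requiring (a) the total power of $|v|$ to be $3$ (matching the left side, which is cubic in $v$), (b) the total power of $\nabla v$ to be $1$, and (c) scale-invariance of the inequality.

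The main obstacle I anticipate is purely the exponent bookkeeping: one has four H\"{o}lder splittings (two in space, at least two in time) plus a Sobolev--Poincar\'{e} step, and it is easy to end up with an exponent configuration that does not close — in particular ensuring that the power of $C(v,r)$ comes out to exactly $\tfrac13$ rather than some $q$-dependent exponent requires the interpolation $\||v|\|_{L^{3(4-q)}}\leq \||v|\|_{L^2}^{\alpha}\||v|\|_{L^3}^{1-\alpha}$ to combine just right with the time-H\"{o}lder exponents. I expect it does close precisely because $3(4-q)\in(3,6)$ for $q\in(2,3)$ — that is exactly the range hypothesis on $q$ — and the interpolation parameter works out so that only the $L^3$-in-time-and-space norm survives at the correct power; the $L^2_x$-in-time part gets absorbed into the $L^\infty_t$ bound $A(v,r)$, contributing the fractional power $\tfrac{4-q}{4}$. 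A secondary technical point is justifying the Sobolev--Poincar\'{e} inequality for $|v|^2$ with the sharp $r$-dependence uniformly in $r$; this is standard (the constant in $W^{1,3/2}\hookrightarrow L^3$ on a ball scales correctly by a rescaling argument), so I would simply cite it. No compactness or regularity theory is needed here — the lemma is a self-contained interpolation estimate, and the suitable-weak-solution hypothesis enters only to guarantee the quantities are finite and the manipulations are legitimate, together with the standing assumption $B_q(v,1)<\infty$.
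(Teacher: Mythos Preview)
Your overall strategy---spatial H\"older, then Poincar\'e--Sobolev on the oscillation, then split $|v|\,|\nabla v|$ to extract $B_q$, then H\"older in time---is exactly the paper's. But your choice of exponent in the \emph{first} H\"older step creates a gap you cannot close. You put the oscillation in $L^3$ and $|v|$ in $L^{3/2}$, then use $W^{1,3/2}\hookrightarrow L^3$; after the second split this forces $|v|^{(4-q)/2}\in L^6$, i.e.\ $v\in L^{3(4-q)}$. For $q\in(2,3)$ the exponent $3(4-q)$ lies strictly \emph{above} $3$, so your claim that it ``interpolates between $L^2$ (controlled by $A$) and $L^3$ (controlled by $C$)'' is false: you cannot reach $L^p$ for $p>3$ from $L^2$ and $L^3$ by interpolation. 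Bringing in $L^6$ via Sobolev would introduce $E(v,r)=r^{-1}\int|\nabla v|^2$ rather than $B_q$, and the exponents would no longer match the target.

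The paper makes the opposite choice: oscillation in $L^{3/2}$, $|v|$ in $L^3$. Then Poincar\'e--Sobolev is $W^{1,1}\hookrightarrow L^{3/2}$, so one only needs $\||v|\,|\nabla v|\|_{L^1}$. Writing $|v|\,|\nabla v|=(|\nabla v|^2|v|^{q-2})^{1/2}\,|v|^{(4-q)/2}$ and applying Cauchy--Schwarz (exponents $2,2$) leaves $\||v|\|_{L^{4-q}}^{(4-q)/2}$; since $4-q\in(1,2)$, a trivial H\"older on the ball gives $\||v|\|_{L^{4-q}}\le C r^{3(\frac{1}{4-q}-\frac12)}\||v|\|_{L^2}$, producing exactly the factor $r^{3(q-2)/4}(A(v,r))^{(4-q)/4}$. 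The $\|v\|_{L^3}$ from the first H\"older then yields $C(v,r)^{1/3}$ after H\"older in $t$, and the remaining time-H\"older gives $B_q(v,r)^{1/2}$. So the fix is a one-line change: swap the r\^oles of $3$ and $3/2$ in your first H\"older split.
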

\begin{proof}
By H\"{o}lder's inequality in space, we have
$$S(v,r)\leq \frac{1}{r^2}\int\limits^{0}_{-r^2} ds\Big(\int\limits_{B(0,r)}||v|^2-[|v|^2]_{B(0,r)}|^{\frac{3}{2}} dx\Big)^{\frac{2}{3}}\Big(\int\limits_{B(0,r)}|v|^3 dx\Big)^{\frac{1}{3}}. $$
Next, using the Poincar\'{e}-Sobolev inequality, we infer that
\begin{equation}\label{Sfirstest}
S(v,r)\leq \frac{C_{univ,(5)}}{r^2}\int\limits^{0}_{-r^2} ds\Big(\int\limits_{B(0,r)}|\nabla v||v| dx\Big)\Big(\int\limits_{B(0,r)}|v|^3 dx\Big)^{\frac{1}{3}}.
\end{equation}
Next, note that for almost every $s\in(-r^2,0)$ we can write
$$\int\limits_{B(0,r)} |\nabla v(x,s)||v(x,s)| dx=\int\limits_{B(0,r)} |\nabla v(x,s)||v(x,s)|^{\frac{q-2}{2}}|v(x,s)|^{\frac{4-q}{2}} dx. $$
Recalling that $q\in (2,3)$ and applying H\"{o}lder's inequality twice thus gives
\begin{equation}\label{vnablavkeyest}
\int\limits_{B(0,r)} |\nabla v(x,s)||v(x,s)| dx\leq C_{q,(1)}r^{\frac{3(q-2)}{4}}\Big(\int\limits_{B(0,r)} |v(x,s)|^2 dx\Big)^{\frac{4-q}{4}}\Big(\int\limits_{B(0,r)}|\nabla v(x,s)|^2 |v(x,s)|^{q-2} dx\Big)^{\frac{1}{2}}.
\end{equation}
Inserting this into \eqref{Sfirstest} gives
\begin{equation}\label{Ssecondest}
S(v,r)\leq C_{q,(2)}r^{\frac{q-5}{2}}(A(v,r))^{\frac{4-q}{4}}\int\limits^{0}_{-r^2} ds\Big(\int\limits_{B(0,r)}|\nabla v|^2|v|^{q-2} dx\Big)^{\frac{1}{2}}\Big(\int\limits_{B(0,r)}|v|^3 dx\Big)^{\frac{1}{3}}.
\end{equation}
Applying H\"{o}lder's inequality to \eqref{Ssecondest} (twice in the time variable) readily gives the desired conclusion.
\end{proof}
\begin{lemma}\label{lemCest}
For every $q\in (2,3)$, there exists $c^{(1)}_{q}>0$ such that the following holds true.

Suppose that $(v,\pi)$ is a suitable weak solution to the Navier-Stokes equations in $Q(0,1)$.
Furthermore, suppose that
\begin{equation}\label{Bqfinite1}
 B_{q}(v,1)<\infty.
\end{equation}
Then for every $0<r\leq\rho<1$, we have the estimate
\begin{equation}\label{Cest}
C(v,r)\leq c^{(1)}_{q}\Big\{\Big(\frac{r}{\rho}\Big)^3(A(v,\rho))^{\frac{3}{2}}+(A(v,\rho))^{\frac{3(4-q)}{8}}\Big(\frac{\rho}{r}\Big)^{3}(B_{q}(v,\rho))^{\frac{3}{4}}\Big\}.
\end{equation}
\end{lemma}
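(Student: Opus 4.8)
The plan is to decompose $|v|^3$ on the small ball $B(0,r)$ by splitting $|v|^3 = |v|^2 \cdot |v|$ and comparing $|v|^2$ to its average $[|v|^2]_{B(0,r)}$, in the spirit of the local pressure/energy estimates in \cite{CKN}, \cite{ladyzhenskayaseregin} and \cite{lin}. Concretely, for fixed $s$ I would write
$$\int_{B(0,r)} |v|^3 dx \leq \int_{B(0,r)} \big||v|^2 - [|v|^2]_{B(0,\rho)}\big|\,|v|\,dx + [|v|^2]_{B(0,\rho)} \int_{B(0,r)} |v|\, dx,$$
and a second application with $B(0,r)$ replaced by $B(0,\rho)$ in the averaged term (using $r \le \rho$, so that $[|v|^2]_{B(0,\rho)} \lesssim \rho^{-3}\int_{B(0,\rho)}|v|^2$). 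The second, ``low-frequency'' term is the easy one: $[|v|^2]_{B(0,\rho)} \lesssim \rho^{-3} \rho A(v,\rho) = \rho^{-2} A(v,\rho)$, and $\int_{B(0,r)}|v|\,dx \le |B(0,r)|^{1/2}(\int_{B(0,r)}|v|^2)^{1/2} \lesssim r^{3/2} (r A(v,\rho))^{1/2}$ pointwise in time; integrating over $t\in(-r^2,0)$ and carefully tracking the powers of $r$ and $\rho$ produces the first term $(r/\rho)^3 (A(v,\rho))^{3/2}$ on the right-hand side of \eqref{Cest}, after multiplying by the $r^{-2}$ prefactor in the definition \eqref{Cdef} of $C(v,r)$.

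The ``high-frequency'' term is where the hypothesis $B_q(v,1)<\infty$ enters, and it is handled exactly as in Lemma \ref{Sestlem}. Using H\"older in space and the Poincar\'e--Sobolev inequality on $B(0,\rho)$,
$$\int_{B(0,\rho)} \big||v|^2 - [|v|^2]_{B(0,\rho)}\big|^{3/2} dx \lesssim \Big(\int_{B(0,\rho)} |\nabla v|\,|v|\, dx\Big)^{3/2},$$
and then the double H\"older split $|\nabla v||v| = |\nabla v||v|^{(q-2)/2}\cdot |v|^{(4-q)/2}$ bounds $\int_{B(0,\rho)}|\nabla v||v|$ by $\rho^{3(q-2)/4}(A(v,\rho))^{(4-q)/4}(\int_{B(0,\rho)}|\nabla v|^2|v|^{q-2})^{1/2}$, just as in \eqref{vnablavkeyest}. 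Multiplying by $\int_{B(0,r)}|v|\,dx \lesssim r^{3/2}(rA(v,\rho))^{1/2}$, integrating in time, and applying H\"older once more in $t$ against the time integral defining $B_q(v,\rho)$, one collects a prefactor that — together with the $r^{-2}$ in \eqref{Cdef} — yields a contribution of the shape $(A(v,\rho))^{3(4-q)/8}(\rho/r)^{\alpha}(B_q(v,\rho))^{3/4}$; the bookkeeping of the scaling exponents should produce $\alpha = 3$ as in the statement (the enlargement from $B(0,r)$ to $B(0,\rho)$ in the gradient factor is what forces the unfavorable power $(\rho/r)^3$ rather than something $O(1)$).

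The main obstacle, and the step I would do most carefully, is the dimensional bookkeeping: each quantity $A$, $B_q$, $C$ carries a specific scaling weight, the exponents $\tfrac34$ on $B_q$ and $\tfrac{3(4-q)}{8}$ on $A$ are rigid, and one must verify that the residual powers of $r$ and $\rho$ combine into precisely $(r/\rho)^3$ and $(\rho/r)^3$ after the $r^{-2}$ normalization — in particular that no net positive power of $\rho$ (which is $<1$) is lost, so the estimate is genuinely useful in the subsequent iteration toward Proposition \ref{Bqsmallreg}. A secondary technical point is ensuring the Poincar\'e--Sobolev constant and the passage from $[|v|^2]_{B(0,r)}$ to $[|v|^2]_{B(0,\rho)}$ are handled uniformly; these are routine but need $q\in(2,3)$ so that all the H\"older exponents $\tfrac{4-q}{4}$, $\tfrac{q-2}{2}$ lie in $(0,1)$ and the interpolation closes.
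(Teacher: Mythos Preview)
Your overall strategy --- split $|v|^3$ via an average of $|v|^2$, use Poincar\'e--Sobolev, and the H\"older split $|\nabla v||v|=|\nabla v||v|^{(q-2)/2}\cdot|v|^{(4-q)/2}$ --- is the right circle of ideas, but as written there are two genuine gaps.

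\textbf{Low-frequency term.} With the average taken on $B(0,\rho)$ and $\int_{B(0,r)}|v|$ bounded by Cauchy--Schwarz, the correct estimate is $\int_{B(0,r)}|v|^2\le \int_{B(0,\rho)}|v|^2\le \rho\, A(v,\rho)$ (not $rA(v,\rho)$ as you wrote), and the bookkeeping then gives only $(r/\rho)^{3/2}(A(v,\rho))^{3/2}$, not $(r/\rho)^{3}$. The paper takes a different route: it averages on $B(0,r)$, writes $C(v,r)\le S(v,r)+Cr^{-7/2}\int_{-r^2}^0\big(\int_{B(0,r)}|v|^2\big)^{3/2}ds$, and then invokes the concentration inequality from \cite[p.~800]{CKN},
\[
\int_{B(0,r)}|v|^2\,dx\le C\rho\int_{B(0,\rho)}|\nabla v||v|\,dx+C\rho\Big(\frac{r}{\rho}\Big)^{3}A(v,\rho),
\]
whose built-in $(r/\rho)^3$ is precisely what survives after taking the $3/2$-power and normalizing by $r^{-7/2}$.

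\textbf{High-frequency term.} The $L^{3/2}$ Poincar\'e--Sobolev bound on the oscillation cannot be ``multiplied by $\int_{B(0,r)}|v|\,dx$'': H\"older pairs $L^{3/2}$ with $L^3$, not $L^1$. Doing this correctly puts a factor $(C(v,r))^{1/3}$ on the right, which must then be absorbed by Young's inequality --- this is exactly what the paper does through Lemma~\ref{Sestlem} and the passage \eqref{Cfirstinequality}$\to$\eqref{Cyoungsinequality}. Your sketch omits this absorption step, and without it the estimate does not close.

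Both gaps are fixable within your framework: if in the low-frequency part you bound $\int_{B(0,r)}|v|$ via H\"older against $L^3$ (picking up another $(C(v,r))^{1/3}$) and then apply Young, you do recover $(r/\rho)^3(A(v,\rho))^{3/2}$; and the corrected high-frequency term yields $(\rho/r)^{3/2}$, which is in fact better than the stated $(\rho/r)^3$. But as written, the exponents you claim are not what your estimates deliver.
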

\begin{proof}
Let $0<r\leq\rho<1$. From p.g 800 of \cite{CKN}, we see that for almost every $s\in (-r^2,0)$ we have
$$\int\limits_{B(0,r)} |v(x,s)|^2 dx\leq C_{univ,(6)}\rho\int\limits_{B(0,\rho)} |\nabla v(x,s)||v(x,s)| dx+C_{univ,(6)}\rho\Big(\frac{r}{\rho}\Big)^3 A(v,\rho). $$
Applying \eqref{vnablavkeyest} (with $\rho$ replacing $r$) gives that for  almost every $s\in (-r^2,0)$:
\begin{equation}\label{vL2est}
\int\limits_{B(0,r)} |v(x,s)|^2 dx\leq C_{q,(1)}\rho^{\frac{q+1}{2}} A(v,\rho)^{\frac{4-q}{4}}\Big(\int\limits_{B(0,\rho)} |\nabla v(x,s)|^2 |v(x,s)|^{q-2} dx\Big)^{\frac{1}{2}}+C_{univ,(6)}\rho\Big(\frac{r}{\rho}\Big)^3 A(v,\rho).
\end{equation}
Next, we write
$$C(v,r)=\frac{1}{r^2}\int\limits^{0}_{-r^2}\int\limits_{B(0,r)} (|v(x,s)|^2-[|v|^2]_{B(0,r)})|v(x,s)| dxds+\frac{1}{r^2}\int\limits^{0}_{-r^2}\int\limits_{B(0,r)} [|v|^2]_{B(0,r)}|v(x,s)| dxds. $$
Hence, using H\"{o}lder's inequality we have
\begin{equation}\label{CinequalityS}
C(v,r)\leq S(v,r)+\frac{C_{univ,(7)}}{r^2}\int\limits_{-r^2}^{0}\frac{1}{r^{\frac{3}{2}}}\Big(\int\limits_{B(0,r)} |v(x,s)|^2 dx\Big)^{\frac{3}{2}} ds.
\end{equation}
Now, using Lemma \ref{Sestlem} we have
\begin{equation}\label{Cfirstinequality}
C(v,r)\leq c^{(0)}_{q} (A(v,r))^{\frac{4-q}{4}} (C(v,r))^{\frac{1}{3}}(B_{q}(v,r))^{\frac{1}{2}}+\frac{C_{univ,(7)}}{r^2}\int\limits_{-r^2}^{0}\frac{1}{r^{\frac{3}{2}}}\Big(\int\limits_{B(0,r)} |v(x,s)|^2 dx\Big)^{\frac{3}{2}} ds.
\end{equation}
Using Young's inequality, we then infer that
\begin{equation}\label{Cyoungsinequality}
C(v,r)\leq C_{q,(3)}(A(v,r))^{\frac{3(4-q)}{8}} (B_{q}(v,r))^{\frac{3}{4}}+\frac{C_{univ,(8)}}{r^2}\int\limits_{-r^2}^{0}\frac{1}{r^{\frac{3}{2}}}\Big(\int\limits_{B(0,r)} |v(x,s)|^2 dx\Big)^{\frac{3}{2}} ds.
\end{equation}
Using this, together with \eqref{vL2est} and H\"{o}lder's inequality in the time variable, one deduces that
\begin{align}\label{Cinequalityrrho}
\begin{split}
&C(v,r)\leq C_{q,(3)}(A(v,r))^{\frac{3(4-q)}{8}} (B_{q}(v,r))^{\frac{3}{4}}+C_{univ,(9)}\Big(\frac{r}{\rho}\Big)^{3} (A(v,\rho))^{\frac{3}{2}}
\\&+C_{univ,(9)}(A(v,\rho))^{\frac{3(4-q)}{8}}\Big(\frac{\rho}{r}\Big)^{3}(B_{q}(v,\rho))^{\frac{3}{4}}
\\
&\leq \Big(C_{q,(3)}\Big(\frac{\rho}{r}\Big)^{\frac{3(4-q)}{8}+\frac{3(3-q)}{4}}+C_{univ,(9)}\Big(\frac{\rho}{r}\Big)^{3}\Big)(A(v,\rho))^{\frac{3(4-q)}{8}}(B_{q}(v,\rho))^{\frac{3}{4}}+C_{univ,(9)}\Big(\frac{r}{\rho}\Big)^{3} (A(v,\rho))^{\frac{3}{2}}.
\end{split}
\end{align}
Using this, along with the fact that $q\in (2,3)$, the desired conclusion readily follows.
\end{proof}

\begin{lemma}\label{Destlem}
For every $q\in (2,3)$, there exists a positive constant $c^{(2)}_{q}$ (depending only on $q$) such that the following holds true.
Suppose that $(v,\pi)$ is a suitable weak solution to the Navier-Stokes equations in $Q(0,1)$.
Furthermore, suppose that
\begin{equation}\label{Bqfinite3}
 B_{q}(v,1)<\infty.
\end{equation}
Then for every $0<r\leq\rho<1$, we have the estimate
\begin{equation}\label{Dest}
D(\pi,r)\leq c^{(2)}_{q}\Big\{\Big(\frac{\rho}{r}\Big)^2(A(v,\rho))^{\frac{3(4-q)}{8}}(B_{q}(v,\rho))^{\frac{3}{4}}+\Big(\frac{r}{\rho}\Big)^{\frac{5}{2}} D(\pi,\rho)\Big\}.
\end{equation}
\end{lemma}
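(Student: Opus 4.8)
The plan is to adapt the classical Calder\'on--Zygmund/harmonic splitting of the pressure from \cite{CKN}, arranging the local part so that it is controlled through $B_q$-type quantities rather than the Dirichlet integral, exactly as in the interpolation underlying Lemmas~\ref{Sestlem}--\ref{lemCest}. First I would dispose of the range $\rho/2<r\le\rho$, in which \eqref{Dest} is immediate: for any constant $c$, $|\pi-[\pi]_{B(0,r)}|\le|\pi-c|+\big|[\,\pi-c\,]_{B(0,r)}\big|$, and since $B(0,r)\subset B(0,\rho)$ with comparable measure this gives $D(\pi,r)\le C\,D(\pi,\rho)\le C\,2^{5/2}\big(\tfrac r\rho\big)^{5/2}D(\pi,\rho)$. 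Hence it suffices to treat $0<r\le\rho/2$.

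For $r\le\rho/2$, recall that a suitable weak solution satisfies the pressure equation $-\Delta\pi=\partial_i\partial_j(v_iv_j)$ in $Q(0,1)$. For a.e.\ $s$, decompose $\pi(\cdot,s)=\pi^{(1)}(\cdot,s)+\pi^{(2)}(\cdot,s)$ on $B(0,\rho)$, where $\pi^{(1)}:=R_iR_j\big(\mathbf 1_{B(0,\rho)}v_iv_j\big)$ (with $R_i$ the Riesz transforms) solves the Poisson equation with the source truncated to $B(0,\rho)$, and $\pi^{(2)}:=\pi-\pi^{(1)}$ is harmonic in $B(0,\rho)$. For the harmonic part, interior gradient estimates for harmonic functions give, for $x\in B(0,r)$,
$$\big|\pi^{(2)}(x,s)-[\pi^{(2)}(\cdot,s)]_{B(0,r)}\big|\le C\,\frac r\rho\cdot\frac1{\rho^{3}}\int_{B(0,\rho)}\big|\pi^{(2)}(y,s)-[\pi^{(2)}(\cdot,s)]_{B(0,\rho)}\big|\,dy;$$
raising this to the power $\tfrac32$, integrating over $B(0,r)\times(-r^2,0)$, applying Jensen's inequality and accounting for the prefactors $r^{-2}$ and $\rho^{-2}$ in the definition of $D$ yields $D(\pi^{(2)},r)\le C\big(\tfrac r\rho\big)^{5/2}D(\pi^{(2)},\rho)$. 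Since $\pi^{(2)}=\pi-\pi^{(1)}$ and Jensen lets one replace spatial means by the functions themselves, $D(\pi^{(2)},\rho)\le C\,D(\pi,\rho)+C\,\tfrac1{\rho^{2}}\int_{-\rho^2}^0\|\pi^{(1)}(\cdot,s)\|_{L^{3/2}(\mathbb R^3)}^{3/2}\,ds$.

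The remaining, and main, work is the local part. By the $L^{3/2}(\mathbb R^3)$-boundedness of $R_iR_j$, $\|\pi^{(1)}(\cdot,s)\|_{L^{3/2}(\mathbb R^3)}\le C\,\|v(\cdot,s)\|_{L^{3}(B(0,\rho))}^{2}$, so that both the last term above and $D(\pi^{(1)},r)$ itself are bounded by
$$\frac1{r^2}\int_{-r^2}^0\int_{B(0,\rho)}|v|^{3}\,dx\,ds\ \le\ \Big(\frac\rho r\Big)^{2}C(v,\rho),$$
the factor $(\rho/r)^{2}$ coming solely from enlarging the time interval $(-r^2,0)$ to $(-\rho^2,0)$. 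Thus everything reduces to estimating $C(v,\rho)=\tfrac1{\rho^{2}}\int_{Q(0,\rho)}|v|^{3}$ in terms of $A(v,\rho)$ and $B_q(v,\rho)$, which is exactly the interpolation already carried out in the proofs of Lemmas~\ref{Sestlem}--\ref{lemCest}: H\"older's inequality, the Poincar\'e--Sobolev inequality, and the splitting $|\nabla v|\,|v|=\big(|\nabla v|\,|v|^{\frac{q-2}{2}}\big)|v|^{\frac{4-q}{2}}$ that brings in $B_q$ rather than $\int|\nabla v|^{2}$ (cf.\ \eqref{vnablavkeyest}), followed by Young's inequality, where the power $\tfrac32$ appears when one absorbs the factor $C(v,\cdot)^{1/3}$ from Lemma~\ref{Sestlem}; this produces the term $A(v,\rho)^{\frac{3(4-q)}{8}}B_q(v,\rho)^{\frac34}$. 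Combining the harmonic and local estimates and using $r\le\rho$ to absorb $(r/\rho)^{5/2}C(v,\rho)$ into $(\rho/r)^{2}C(v,\rho)$ gives \eqref{Dest}.

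The step I expect to be the real obstacle is the conversion of $\|v(\cdot,s)\|_{L^{3}(B(0,\rho))}^{2}$ into $A(v,\rho)$ and $B_q(v,\rho)$ with the exact exponents and the exact power of $\rho$: one has to use the Poincar\'e--Sobolev inequality on $|v|^{q/2}$ (equivalently, the H\"older splitting above) so that $B_q$ and not the Dirichlet energy enters, interpolate carefully between $L^{2}$ and $L^{3q}$, and keep track of the spatial-mean contributions --- these are harmless for $D$, which subtracts spatial averages, yet must still be propagated through the computation. The harmonic part, by contrast, is routine once the splitting is set up.
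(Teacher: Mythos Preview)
Your harmonic--local splitting and the treatment of the harmonic piece $\pi^{(2)}$ are fine and match the paper. The gap is in the local piece, and it is a real one.

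You define $\pi^{(1)}=R_iR_j(\mathbf 1_{B(0,\rho)}v_iv_j)$ \emph{without} subtracting the spatial mean of $v_iv_j$, and then bound $\|\pi^{(1)}\|_{L^{3/2}}$ by $\|v\|_{L^3(B(0,\rho))}^2$, reducing the local contribution to $(\rho/r)^2\,C(v,\rho)$. But the bound you then need,
\[
C(v,\rho)\ \le\ c_q\,A(v,\rho)^{\frac{3(4-q)}{8}}\,B_q(v,\rho)^{\frac34},
\]
is simply false: take $v$ equal to a nonzero constant vector on $B(0,\rho)$, so that $B_q(v,\rho)=0$ while $C(v,\rho)>0$. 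What the interpolation behind Lemma~\ref{lemCest} actually produces at $r=\rho$ is
\[
C(v,\rho)\ \le\ c_q\Big(A(v,\rho)^{\frac{3(4-q)}{8}}B_q(v,\rho)^{\frac34}+A(v,\rho)^{\frac32}\Big),
\]
the extra $A^{3/2}$ coming precisely from the spatial-mean term in \eqref{CinequalityS}. Feeding this into your argument leaves an unwanted $(\rho/r)^2A(v,\rho)^{3/2}$ on the right of \eqref{Dest}, which is not in the lemma as stated (and would spoil the iteration in the proof of Proposition~\ref{Bqsmallreg}, since after squaring $D$ it becomes quadratic in $\mathcal E$). Your remark that ``spatial-mean contributions are harmless for $D$'' does not save this: subtracting $[\pi^{(1)}]_{B(0,r)}$ from $\pi^{(1)}$ is \emph{not} the same operation as subtracting $[v_iv_j]_{B(0,\rho)}$ before applying $R_iR_j$, because $R_iR_j(\mathbf 1_{B(0,\rho)}\cdot\text{const})$ is not a constant.

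The paper's remedy is exactly that subtraction: set $\pi_1=R_iR_j\big(\chi_{B(0,\rho)}(v_iv_j-[v_iv_j]_{B(0,\rho)})\big)$. Calder\'on--Zygmund then gives $\|\pi_1\|_{L^{3/2}}\le C\|v_iv_j-[v_iv_j]_{B(0,\rho)}\|_{L^{3/2}(B(0,\rho))}$, and now Poincar\'e--Sobolev applies to yield $\|\pi_1\|_{L^{3/2}}^{3/2}\le C\big(\int_{B(0,\rho)}|v|\,|\nabla v|\big)^{3/2}$. From here the splitting $|v|\,|\nabla v|=|v|^{(4-q)/2}\cdot|v|^{(q-2)/2}|\nabla v|$ and \eqref{vnablavkeyest} convert this directly into $A(v,\rho)^{3(4-q)/8}B_q(v,\rho)^{3/4}$, never passing through $C(v,\rho)$ and never producing the spurious $A^{3/2}$. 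Once you make this single change in the definition of the local part, the rest of your outline goes through.
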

\begin{proof}
Let $\chi_{B(0,\rho)}:\mathbb{R}^3\rightarrow \{0,1\}$ denote the indicator function, that is equal to 1 on $B(0,\rho)$ and is zero elsewhere.

For almost every $s\in (-\rho^2,0)$, we write
\begin{equation}\label{pressuredecomp}
\pi(x,s)=\pi_{1}(x,s)+\pi_{2}(x,s)\,\,\,\textrm{with}\,\,\,\pi_{1}:= \mathcal{R}_{i}\mathcal{R}_{j}(\chi_{B(0,\rho)}(v_{i}v_{j}-[v_{i}v_{j}]_{B(0,\rho)})).
\end{equation}
In \eqref{pressuredecomp}, $R=(R_{\alpha})_{\alpha=1,\ldots 3}$ is the Riesz transform and we utilize the Einstein summation convention.

It is not difficult to show that for almost every $s\in (-\rho^2,0)$, we must have
\begin{equation}\label{q2harmonic}
\Delta \pi_{2}(x,s)=0\,\,\,\textrm{in}\,\,\,B(0,\rho).
\end{equation}
Using properties of harmonic functions and the same arguments as in \cite{gregory2014lecture} (specifically p.g 145 of \cite{gregory2014lecture}) yields the following estimate:
\begin{equation}\label{pressureestharmonicity}
D(\pi,r)\leq\frac{C_{univ,(10)}}{r^2}\int\limits_{Q(0,r)} |\pi_{1}(x,s)|^{\frac{3}{2}} dxds+C_{univ,(10)}\Big(\frac{r}{\rho}\Big)^{\frac{5}{2}}\Big(D(\pi,\rho)+\frac{1}{{\rho}^2}\int\limits_{Q(0,\rho)} |\pi_{1}(x,s)|^{\frac{3}{2}} dxds\Big).
\end{equation}
It remains to estimate $$\frac{1}{{\rho}^2}\int\limits_{Q(0,\rho)} |\pi_{1}(x,s)|^{\frac{3}{2}} dxds.$$
Recalling \eqref{pressuredecomp} and applying Calder\'{o}n-Zygmund estimates give that for almost every $s\in (-\rho^2,0)$
$$\int\limits_{B(0,\rho)} |\pi_{1}(x,s)|^{\frac{3}{2}} dx\leq C_{univ,(11)}\sum_{i,j=1}^{3}\int\limits_{B(0,\rho)} |v_{i}v_{j}(x,s)-[v_{i}v_{j}(\cdot,s)]_{B(0,\rho)}|^{\frac{3}{2}} dx. $$
Next, the Poincar\'{e}-Sobolev inequality gives that for almost every $s\in (-\rho^2,0)$:
\begin{equation}\label{p1est1}
\int\limits_{B(0,\rho)} |\pi_{1}(x,s)|^{\frac{3}{2}} dx\leq C_{univ,(12)}\Big(\int\limits_{B(0,\rho)} |v(x,s)||\nabla v(x,s)| dx\Big)^{\frac{3}{2}}.
\end{equation}
Next, utilizing \eqref{vnablavkeyest} (with $\rho$ instead of $r$) yields
\begin{equation}\label{p1est2}
\int\limits_{B(0,\rho)} |\pi_{1}(x,s)|^{\frac{3}{2}} dx\leq C_{q,(4)}\rho^{\frac{3(q-1)}{4}}(A(v,\rho))^{\frac{3(4-q)}{8}}\Big(\int\limits_{B(0,\rho)}|\nabla v(x,s)|^2|v(x,s)|^{q-2} dx\Big)^{\frac{3}{4}}.
\end{equation}
Integrating this in time and applying H\"{o}lder's inequality in time yields
\begin{equation}\label{p1keyest}
\frac{1}{\rho^2}\int\limits_{Q(0,\rho)} |\pi_{1}|^{\frac{3}{2}} dxds\leq C_{univ,(13)}(A(v,\rho))^{\frac{3(4-q)}{8}}(B_{q}(v,\rho))^{\frac{3}{4}}.
\end{equation}
Combining this with \eqref{pressureestharmonicity} then gives the desired conclusion.
\end{proof}
\subsection{Proof of Proposition \ref{Bqsmallreg}}
\begin{proof}
First, recall that the local energy inequality implies that for all positive functions $\varphi\in C^{\infty}_{0}(B(0,1)\times (-1,\infty))$ the following inequality holds for almost every $t\in (-1,0)$:
\begin{align}\label{localnenergyequality}
\begin{split}
&\int\limits_{B(0,1)} |v(x,t)|^2 \varphi(x,t) dx+2\int\limits_{-1}^{t}\int\limits_{B(0,1)} |\nabla v(x,s)|^2\varphi(x,s) dxds\leq \int\limits_{-1}^{t}\int\limits_{B(0,1)} | v(x,s)|^2(\partial_{s}+\Delta)\varphi(x,s) dxds+\\&
+\int\limits_{-1}^{t}\int\limits_{B(0,1)} (| v(x,s)|^2-[|v(\cdot,s)|^2]_{B(0,2r)})v(x,s)\cdot\nabla\varphi+ 2(\pi(x,s)-[\pi(\cdot,s)]_{B(0,2r)})v(x,s)\cdot\nabla\varphi dxds.
\end{split}
\end{align}
Here, we have taken $0\leq r\leq \frac{1}{2}.$
By selecting an appropriate test function $\varphi$, together with making use of H\"{o}lder's inequality, one can show that \eqref{localnenergyequality} implies the following. Namely, for any $0<r\leq \frac{1}{2}$:
\begin{equation}\label{localenergybasicest}
A(v,r)+E(v,r)\leq C_{univ,(14)}\{(C(v,2r))^{\frac{2}{3}}+S(v,2r)+(D(\pi,2r))^{\frac{2}{3}}(C(v,2r))^{\frac{1}{3}}\}.
\end{equation}
Recall that in Proposition \ref{Bqsmallreg}, we assume that
\begin{equation}\label{Bqsmallrecall}
\sup_{0<\rho<1} B_{q}(v,\rho)<\varepsilon,
\end{equation}
where $\varepsilon=\varepsilon_{q}\in (0,1)$ is to be determined.

Inserting \eqref{Bqsmallrecall} into Lemmas \ref{Sestlem}-\ref{Destlem} allows us to infer that the following hold true. In particular, for all $0<2r\leq\rho<1$ we have
\begin{equation}\label{energyestBqsmall}
A(v,r)+E(v,r)\leq C_{q,(5)}\{C(v,2r))^{\frac{2}{3}}+(D(\pi,2r))^{\frac{2}{3}}(C(v,2r))^{\frac{1}{3}}+(A(v,2r))^{\frac{4-q}{4}}(C(v,2r))^{\frac{1}{3}}\varepsilon^{\frac{1}{2}}\},
\end{equation}
\begin{equation}\label{CestBqsmall}
C(v,2r)\leq C_{q,(6)}\Big\{\Big(\frac{r}{\rho}\Big)^3 (A(v,\rho))^{\frac{3}{2}}+(A(v,\rho))^{\frac{3(4-q)}{8}}\Big(\frac{\rho}{r}\Big)^{3}\varepsilon^{\frac{3}{4}}\Big\}\,\,\,\textrm{and}
\end{equation}
\begin{equation}\label{DestBqsmall}
D(\pi,2r)\leq C_{univ,(15)}\Big\{\Big(\frac{\rho}{r}\Big)^2(A(v,\rho))^{\frac{3(4-q)}{8}}\varepsilon^{\frac{3}{4}}+\Big(\frac{r}{\rho}\Big)^{\frac{5}{2}} D(\pi,\rho)\Big\}.
\end{equation}
Let us define
\begin{equation}\label{Ebigdef}
\mathcal{E}(v,\pi,r):=(A(v,r))^{\frac{3}{2}}+(D(\pi,r))^{2}.
\end{equation}
From \eqref{energyestBqsmall} and Young's inequality, we see that
\begin{equation}\label{Ebigfirstest}
\mathcal{E}(v,\pi,r)\leq C_{q,(7)}\{C(v,2r)+(D(\pi,2r))^2+(A(v,2r))^{\frac{3(4-q)}{4}}\varepsilon^{\frac{3}{2}}\}.
\end{equation}
Substituting \eqref{CestBqsmall}-\eqref{DestBqsmall} into \eqref{Ebigfirstest} (and recalling $2r\leq\rho$) gives
\begin{equation}\label{Ebigsecondest}
\mathcal{E}(v,\pi,r)\leq C_{q,(8)}\Big\{\Big(\frac{r}{\rho}\Big)^{3}\mathcal{E}(v,\pi,\rho)+(A(v,\rho))^{\frac{3(4-q)}{8}}\Big(\frac{\rho}{r}\Big)^{3}\varepsilon^{\frac{3}{4}}+\Big(\frac{\rho}{r}\Big)^4(A(v,\rho))^{\frac{3(4-q)}{4}}\varepsilon^{\frac{3}{2}}+(A(v,2r))^{\frac{3(4-q)}{4}}\varepsilon^{\frac{3}{2}}\Big\}.
\end{equation}
Using $A(v,2r)\leq \frac{\rho}{2r} A(v,\rho)$, $q\in (2,3)$, $\varepsilon\in (0,1)$ and Young's inequality, we can now deduce the following. Namely,
\begin{equation}\label{Ebigthirdest}
\mathcal{E}(v,\pi,r)\leq C_{q,(9)}\Big\{\Big(\frac{r}{\rho}\Big)^{3}\mathcal{E}(v,\pi,\rho)+\Big(\frac{\rho}{r}\Big)^6(A(v,\rho))^{\frac{3(4-q)}{4}}\varepsilon^{\frac{1}{2}}+\varepsilon\Big\}.
\end{equation}
Noting that $q\in (2,3)$ implies $\frac{2}{4-q}\in (1,2)$, we can apply Young's inequality once more.
Thus for fixed positive $\delta$  and for all $0<2r\leq\rho\leq 1$, we get
\begin{equation}\label{Ebigmainestgeneral}
\mathcal{E}(v,\pi,r)\leq C_{q,(10)}\Big\{\Big(\Big(\frac{r}{\rho}\Big)^{3}+\delta\Big)\mathcal{E}(v,\pi,\rho)+\varepsilon+\delta^{-\frac{4-q}{q-2}}\varepsilon^{\frac{1}{q-2}}\Big(\frac{\rho}{r}\Big)^{\frac{12}{q-2}}\Big\}.
\end{equation}
Therefore, for any $0<\theta\leq\tfrac{1}{2}$ and $0<\rho\leq 1$ we have
\begin{equation}\label{Ebigthetarhofirstest}
\mathcal{E}(v,\pi,\theta\rho)\leq C_{q,(11)}\{(\theta^3+\delta)\mathcal{E}(v,\pi,\rho)+\varepsilon+\delta^{-\frac{4-q}{q-2}}\varepsilon^{\frac{1}{q-2}}\theta^{-\frac{12}{q-2}}\}.
\end{equation}
Next we fix $\theta=\theta_{q}\in (0,\frac{1}{2}]$ and $\delta=\delta_{q}>0$ (independent of $\varepsilon$) such that
\begin{equation}\label{thetadeltafix}
2C_{q,(11)}\theta\leq 1\,\,\,\textrm{and}\,\,\,C_{q,(11)}\delta<\frac{\theta^2}{2}.
\end{equation}
This gives
\begin{equation}\label{Ebigthetarhosecondest}
\mathcal{E}(v,\pi,\theta\rho)\leq \theta^2 \mathcal{E}(v,\pi,\rho)+ G_{\delta,\theta}(\varepsilon)\,\,\,\textrm{with}\,\,\, G_{\delta,\theta}(\varepsilon):=C_{q,(11)}(\varepsilon+\delta^{-\frac{4-q}{q-2}}\varepsilon^{\frac{1}{q-2}}\theta^{-\frac{12}{q-2}}).
\end{equation}
Setting $\rho=1$ and iterating \eqref{Ebigthetarhosecondest} gives that for every $k\in\mathbb{N}$:
\begin{equation}\label{Ebigiterated}
\mathcal{E}(v,\pi,\theta^k)\leq \theta^{2k}\mathcal{E}(v,\pi,1)+\frac{G_{\delta,\theta}(\varepsilon)}{1-\theta^2}.
\end{equation}
Thus, for all $r\in (0,1]$ we obtain
\begin{equation}\label{Ebigrdecay}
\mathcal{E}(v,\pi,r)\leq \frac{r^2}{\theta^6} \mathcal{E}(v,\pi,1)+\frac{G_{\delta,\theta}(\varepsilon)}{\theta^4(1-\theta^2)}.
\end{equation}
From \eqref{Ebigthetarhosecondest} and since $\delta$ and $\theta$ are fixed independently of $\varepsilon$, we see that $\lim_{\varepsilon\rightarrow 0^+} G_{\delta,\theta}(\varepsilon)=0.$ So, there exists $\varepsilon=\varepsilon_{q}$ such that
$$\frac{G_{\delta,\theta}(\varepsilon)}{\theta^4(1-\theta^2)}<\Big(\frac{\varepsilon_{*}}{2}\Big)^{\frac{3}{2}}. $$
Here, $\varepsilon_{*}$ is as in Lemma \ref{smallkineticimpliesreg}.
Hence, one concludes
\begin{equation}\label{Alimsupsmall}
\limsup_{R_{*}\downarrow 0} \sup_{0<R<R_{*}} A(v,R)<\varepsilon_{*}.
\end{equation}
Hence, by Lemma \ref{smallkineticimpliesreg}, we see that $(x,t)=(0,0)$ is a regular point of $v$ as required.
\end{proof}
\section{Proof of the reduction in the Hausdorff dimension of the singular set (Theorems \ref{hausdorffdimreduce}-\ref{hausdorffdimreducegen2})}
Having established the $\varepsilon$- regularity criterion (Proposition \ref{Bqsmallreg}), in this section we prove Theorems \ref{hausdorffdimreduce}-\ref{hausdorffdimreducegen2} by means of Theorem \ref{thm:higherinteggen} and a covering argument found in Caffarelli, Kohn and Nirenberg's paper \cite{CKN}.
\subsection{Proof of Theorem \ref{hausdorffdimreduce}}
\begin{proof}
Recall that the assumptions in Theorem  \ref{hausdorffdimreduce} allow us to apply Theorem \ref{thm:higherinteggen}. In particular, for $(p,r,\delta,\gamma)$ as in Theorem \ref{hausdorffdimreduce} and $\pi\in L^{r}_{t}L^{p}_{x}(\mathbb{R}^3\times(0,T^*))$ we have:
\begin{equation}\label{highintegrecall}
\int\limits_{t_{1}}^{T^*}\int\limits_{\mathbb{R}^3} |v|^{q-2}|\nabla v|^2 dxds<\infty\quad\textrm{with}\quad q=3-2\delta.
\end{equation}
Let $(x,T^*)$ be any member of the singular set $\sigma$, where $\sigma$ is defined in \eqref{sigmadefhausdorff}. From Proposition \ref{Bqsmallreg}, we see that for all $\hat{\varepsilon}\in (0, \sqrt{T^*-t_{1}})$ there exists $r_{x,\hat{\varepsilon}}<\frac{\hat{\varepsilon}}{5}$ such that
\begin{equation}\label{Bqlowerboundsing}
\int\limits_{T^*-r_{x,\hat{\varepsilon}}^2}^{T^*}\int\limits_{B(x,r_{x,\hat{\varepsilon}})}  |v|^{q-2}|\nabla v|^2 dxds\geq \frac{\varepsilon_{q}}{2} r_{x,\hat{\varepsilon}}^{3-q}=\frac{\varepsilon_{q}}{2} r_{x,\hat{\varepsilon}}^{2\delta}.
\end{equation}
By Vitali's covering lemma, there exists a countable subset $\{x_{i}\}_{i\in\mathbb{N}}\subset\sigma$ such that
\begin{equation}\label{disjoint}
\bar{B}(x_{i}, r_{x_{i},\hat{\varepsilon}})\cap \bar{B}(x_{j}, r_{x_{j},\hat{\varepsilon}})=\emptyset\,\,\,\textrm{for}\,\,\,i\neq j\,\,\,\textrm{and}
\end{equation}
\begin{equation}\label{countablecovering}
\sigma\subseteq \cup_{x\in\sigma} \bar{B}(x,r_{x,\hat{\varepsilon}})\subseteq \cup_{i=1}^{\infty} \bar{B}(x_{i}, 5r_{x_{i},\hat{\varepsilon}}).
\end{equation}
From \eqref{Bqlowerboundsing}, we see that for every $i\in\mathbb{N}$
\begin{equation}\label{Bqlowerboundcountable}
\frac{\varepsilon_{q}}{2} (5r_{x_{i},\hat{\varepsilon}})^{2\delta}\leq 5^{2\delta} \int\limits_{T^*-\hat{\varepsilon}^2}^{T^*}\int\limits_{B(x_{i},r_{x_{i},\hat{\varepsilon}})}  |v|^{q-2}|\nabla v|^2 dxds.
\end{equation}
Using \eqref{disjoint}, we see that
\begin{align}\label{sumdisjoint}
\begin{split}
 \sum_{i=1}^{\infty}\frac{\varepsilon_{q}}{2}(5r_{x_{i},\hat{\varepsilon}})^{2\delta}&\leq 5^{2\delta}\sum_{i=1}^{\infty}\int\limits_{T^*-\hat{\varepsilon}^2}^{T^*}\int\limits_{B(x_{i},r_{x_{i},\hat{\varepsilon}})}  |v|^{q-2}|\nabla v|^2 dxds\\
 &\leq 5^{2\delta}\int\limits_{T^*-\hat{\varepsilon}^2}^{T^*}\int\limits_{\mathbb{R}^3}  |v|^{q-2}|\nabla v|^2 dxds.
\end{split}
\end{align}
Using this, \eqref{countablecovering}, the fact that $5r_{x_{i},\hat{\varepsilon}}<\hat{\varepsilon}$ and the definition of Hausdorff measure, we infer the following. Namely
$$\mathcal{H}^{2\delta,\hat{\varepsilon}}(\sigma)\leq \frac{ 5^{2\delta}2}{\varepsilon_{q}}\int\limits_{T^*-\hat{\varepsilon}^2}^{T^*}\int\limits_{\mathbb{R}^3}  |v|^{q-2}|\nabla v|^2 dxds. $$
Sending $\hat{\varepsilon}\downarrow 0$ and applying the dominated convergence theorem gives us the desired conclusion.
\end{proof}

\subsection{Proof of Theorem \ref{hausdorffdimreducegen2}}

\begin{proof}
{\bf \underline{Case I: $r> 2$.}} For any $\delta\in (0,\frac{1}{2})$, one can choose $\gamma_0>0$ such that for any $\gamma\in (0,\gamma_0)$ it satisfies
\beno
i)~~ \delta>\frac{3\gamma}{2+2\gamma}.
\eeno
Using Proposition \ref{interpolativepropertyLorentz}, we interpolate \eqref{typeIpresgenmaintheo2} with\footnote{Recall that for a weak Leray-Hopf solution $v:\mathbb{R}^3\times (0,\infty)\rightarrow \mathbb{R}^3$, it is known that the associated pressure $\pi$ belongs to $L^{a}_{t}L^{b}_{x}$ with $$\frac{2}{a}+\frac{3}{b}=3$$ and $1\leq a<\infty.$} $\pi\in L^{1}_{t}L^{3}_{x}(\mathbb{R}^3\times (0,T^*))$ gives $\pi\in L^{r_1}_{t}L^{s_1}_{x}(\mathbb{R}^3\times (0,T^*))$ with
\beno
ii)~~ \frac{2}{r_1}+\frac{3}{s_1}=2+\gamma,\quad 2\leq r_1<r,\frac32<s<s_1.
\eeno
Moreover, $$\frac{1}{s_1}=\frac{1}{s}+\gamma\Big(\frac{1}{3}-\frac{1}{s}\Big) .$$
Using $s>\frac{3}{2}$, one can choose
$$\gamma_0<\frac{\delta(2s-3)}{s-3\delta},$$ which ensures that
\beno
iii)~~g(\delta,\gamma)\doteq \frac{3\delta}{2\delta-(1-\delta)\gamma}<s_1.
\eeno

By Theorem
\ref{thm:higherinteggen}, we have
\beno
\int\limits_{t_1}^{T^*}\int\limits_{\mathbb{R}^3} |\nabla v|^2|v|^{1-2\delta} dxdt<\infty
\eeno
which implies $$\mathcal{H}^{2\delta}(\sigma)=0$$ due to Proposition \ref{Bqsmallreg} and the same arguments as the proof of Theorem  \ref{hausdorffdimreduce}.

{\bf \underline{Case II: $1< r< 2$.}} In this case, $s\in (3,\infty)$. Hence, for any $\delta \in(0,\frac{1}{2})$, one can choose $0<\gamma_1<\frac{1}{3}$ such that for any $\gamma\in (0,\gamma_1)$ the following inequalities hold:
\begin{equation}\label{deltacase2}
\delta\geq \frac{\gamma}{2\Big(\frac{1-\gamma}{s}+\frac{2\gamma}{3}\Big)}>\frac{3\gamma}{2+2\gamma}.
\end{equation}
Interpolating with $\pi\in L^{2}_{t}L^{\frac{3}{2}}_{x}(\mathbb{R}^3\times (0,T^*))$ gives $\pi\in L^{r_1}_{t}L^{s_1}_{x}(\mathbb{R}^3\times (0,T^*))$ with
\beno
\frac{2}{r_1}+\frac{3}{s_1}=2+\gamma,\quad 1\leq r<r_1<2\quad\textrm{and}\quad\frac{3}{2}<s_1<s.
\eeno
Moreover,
$$\frac{1}{s_1}=\frac{1-\gamma}{s}+\frac{2\gamma}{3} .$$
Furthermore,  $s>3$ and \eqref{deltacase2} ensures that
\beno
\frac{3}{1+\gamma}< s_1\leq \frac{2\delta}{\gamma}.
\eeno
Then Theorem
\ref{thm:higherinteggen} applies similarly to \textbf{Case I} and $\mathcal{H}^{2\delta}(\sigma)=0$.

{\bf \underline{Case III: $r=2$ and $s=3$.}}
In this case, for any $\delta\in (0,\frac{1}{2})$, one can choose $0<\gamma_{2}<\frac{1}{3}$ such that for any $\gamma\in (0,\gamma_{2})$ the following inequalities hold:
\begin{equation}\label{deltacase3}
\delta\geq \frac{3\gamma}{2+\gamma}>\frac{3\gamma}{2+2\gamma}.
\end{equation}
Interpolating  with $\pi\in L^{\frac{4}{3}}_{t}L^{2}_{x}(\mathbb{R}^3\times (0,T^*))$ gives $\pi\in L^{r_1}_{t}L^{s_1}_{x}(\mathbb{R}^3\times (0,T^*))$ with
$$\frac{1}{s_1}=\frac{1-\gamma}{3}+\frac{\gamma}{2}=\frac{2+\gamma}{6}. $$
Using this and \eqref{deltacase3}, we readily see that
\beno
\frac{3}{1+\gamma}< s_1\leq \frac{2\delta}{\gamma}.
\eeno
The remaining arguments required are identical to the previous case and are hence omitted.

The proof is complete.
\end{proof}
\subsection*{Acknowledgement}
W. Wang was supported by NSFC under grant 12071054 and by Dalian High-level Talent Innovation Project (Grant 2020RD09).

\bibliography{refs}

\begin{thebibliography}{1}

\bibitem{albritton}
 Albritton, D. Blow-up criteria for the Navier-Stokes equations in non-endpoint critical Besov spaces. Anal. PDE 11 (2018), no. 6, 1415–1456.
 \bibitem{albrittonbarker} Albritton, D.; Barker, T. Global weak Besov solutions of the Navier-Stokes equations and applications. Arch. Ration. Mech. Anal. 232 (2019), no. 1, 197–263.
 \bibitem{barkerhigherinteg} Barker, T. Higher integrability and the number of singular points for the Navier-Stokes equations with a scale-invariant bound. arXiv preprint. November 2021. arXiv:2111.14776.

 \bibitem{bdv} Beirão da Veiga, H. Concerning the regularity of the solutions to the Navier-Stokes equations via the truncation method. II. Équations aux dérivées partielles et applications, 127–138, Gauthier-Villars, Éd. Sci. Méd. Elsevier, Paris, 1998.
\bibitem{berselli}
Berselli, L. Sufficient conditions for the regularity of the solutions of the Navier-Stokes equations. Math. Methods Appl. Sci. 22 (1999), no. 13, 1079–1085.
\bibitem{berselligaldi}
Berselli, L.; Galdi, G., 2002. Regularity criteria involving the pressure for the weak solutions to the Navier-Stokes equations. Proceedings of the American Mathematical Society, 130(12), pp.3585-3595.


 \bibitem {CKN}
Caffarelli, L.; Kohn, R.-V.; Nirenberg, L. \emph{Partial regularity of
suitable weak solutions of the Navier-Stokes equations}, Comm. Pure
Appl. Math., Vol. XXXV (1982), pp. 771--831.

\bibitem{caifangzhai}
Cai, Z.; Fan, J.; Zhai, J. Regularity criteria in weak spaces for 3-dimensional Navier-Stokes equations in terms of the pressure. Differential Integral Equations 23 (2010), no. 11-12, 1023–1033.
\bibitem{caotiti}
Cao, C.; Titi, E.S. Regularity criteria for the three-dimensional Navier-Stokes equations. Indiana Univ. Math. J. 57 (2008), no. 6, 2643–2661.
 \bibitem{chaelee} Chae, D.; Lee, J. Regularity criterion in terms of pressure for the Navier-Stokes equations. Nonlinear Anal. 46 (2001), no. 5, Ser. A: Theory Methods, 727–735.

\bibitem{choelewis}
Choe, H.J.; Lewis, J.L. 2000. On the singular set in the Navier–Stokes equations. Journal of Functional Analysis, 175(2), pp.348-369.

\bibitem{constantin}Constantin, P. Local formulas for hydrodynamic pressure and their applications. (Russian) Uspekhi Mat. Nauk 69 (2014), no. 3(417), 3–26; translation in Russian Math. Surveys 69 (2014), no. 3, 395–418.
\bibitem{grafakos}
Grafakos, L. Classical Fourier analysis. Second edition. Graduate Texts in Mathematics, 249. Springer, New York, 2008. xvi+489 pp.
\bibitem{ESS}  Escauriaza, L.; Seregin, G. A.; \v{S}ver\'{a}k, V. $L^{3,\infty}$-solutions of Navier-Stokes equations and backward uniqueness. (Russian) Uspekhi Mat. Nauk 58 (2003), no. 2(350), 3–44; translation in Russian Math. Surveys 58 (2003), no. 2, 211–250.




\bibitem{hunt}
R. A. Hunt. On L(p, q) spaces. Enseignement Math. (2), 12:249–276, 1966.
\bibitem{ji} Ji, X; Wang, Y; Wei, W. New regularity criteria based on pressure or gradient of velocity in Lorentz spaces for the 3D Navier–Stokes equations. Journal of Mathematical Fluid Mechanics. 2020 Mar;22(1):1-8.

\bibitem{kanglee}  Kang, K.; Lee, J. On regularity criteria in conjunction with the pressure of Navier-Stokes equations. Int. Math. Res. Not. 2006, Art. ID 80762, 25 pp.
\bibitem{kaniel}
 Kaniel, S. A sufficient condition for smoothness of solutions of Navier-Stokes equations. Israel J. Math. 6 (1968), 354–358 (1969).

\bibitem{ladyzhenskayaseregin}
Ladyzhenskaya, O.A.; G.A. Seregin. On partial regularity of suitable weak solutions to the three-dimensional Navier—Stokes equations. Journal of Mathematical Fluid Mechanics 1.4 (1999): 356-387.
\bibitem {Le}
J. Leray, \emph{ Sur le mouvement d'un liquide visqueux emplissant
l'espace}, Acta Math. \textbf{63} (1934),  193-248.
\bibitem{lin}  Lin, F. A new proof of the Caffarelli-Kohn-Nirenberg theorem. Comm. Pure Appl. Math. 51 (1998), no. 3, 241–257.
\bibitem{neustupanecas}
 Ne\v{c}as, J.; Neustupa, J. New conditions for local regularity of a suitable weak solution to the Navier-Stokes equation. J. Math. Fluid Mech. 4 (2002), no. 3, 237–256.

\bibitem{O'Neil}R. O’Neil. Convolution operators and l(p, q) spaces. Duke Math. J., 30(1):129–142, 03 1963.

\bibitem{sereginsverakpressure}
 Seregin, G.; \v{S}ver\'{a}k, V. Navier-Stokes equations with lower bounds on the pressure. Arch. Ration. Mech. Anal. 163 (2002), no. 1, 65–86.
\bibitem{sereginL3limit}
Seregin, G. A certain necessary condition of potential blow up for Navier-Stokes equations. Comm. Math. Phys. 312 (2012), no. 3, 833–845.
\bibitem{gregory2014lecture} Seregin G. Lecture notes on regularity theory for the Navier-Stokes equations. World Scientific; 2014 Sep 16.

\bibitem{struwe}
 Struwe, M. On a Serrin-type regularity criterion for the Navier-Stokes equations in terms of the pressure. J. Math. Fluid Mech. 9 (2007), no. 2, 235–242.
 \bibitem{suzucki1}
 Suzuki, T. Regularity criteria of weak solutions in terms of the pressure in Lorentz spaces to the Navier-Stokes equations. J. Math. Fluid Mech. 14 (2012), no. 4, 653–660.
 \bibitem{suzucki2}
  Suzuki, T. A remark on the regularity of weak solutions to the Navier-Stokes equations in terms of the pressure in Lorentz spaces. Nonlinear Anal. 75 (2012), no. 9, 3849–3853.
 \bibitem{tranyu}
 Tran, C.V.; Yu, X. Pressure moderation and effective pressure in Navier-Stokes flows. Nonlinearity 29 (2016), no. 10, 2990–3005.
\bibitem{zhou06proc} Zhou, Y. On regularity criteria in terms of pressure for the Navier-Stokes equations in R3. Proc. Amer. Math. Soc. 134 (2006), no. 1, 149–156.
\bibitem{zhou06math} Zhou, Y. On a regularity criterion in terms of the gradient of pressure for the Navier-Stokes equations in $\mathbb{R}^N$. Z. Angew. Math. Phys. 57 (2006), no. 3, 384–392.
\end{thebibliography}
\bibliographystyle{plain}

\end{document}